\def\JPicScale{1.0}\fi
\theoremstyle{plain}
   \newtheorem*{maintheorem}{Theorem}
   \newtheorem{theorem}{Theorem}[section]
   \newtheorem{proposition}[theorem]{Proposition}     
   \newtheorem{lemma}[theorem]{Lemma}
   \newtheorem{corollary}[theorem]{Corollary}
\theoremstyle{definition}
   \newtheorem*{ack}{Acknowledgements}
   \newtheorem{example}[theorem]{Example}
   \newtheorem{definition}[theorem]{Definition}
   \newtheorem{remark}[theorem]{Remark}
\newcommand{\ChowQ}{/\hspace{-1.2mm}/_{Ch}}
\newcommand{\M}{\overline{M}}
\newcommand{\SL}{\operatorname{SL}}
\newcommand{\PGL}{\operatorname{PGL}}
\newcommand{\Chow}{\operatorname{Chow}}
\newcommand{\Spec}{\operatorname{Spec}}
\newcommand{\Bl}{\operatorname{Bl}}
\newcommand{\Ga}{\mathbb{G}_a}
\newcommand{\Gm}{\mathbb{G}_m}
\newcommand{\PP}{\mathbb{P}}
\numberwithin{theorem}{section}
\begin{document}
\title{Modular interpretation of a non-reductive Chow quotient}
\author{Patricio Gallardo and Noah Giansiracusa}
\maketitle


\begin{abstract}
The space of $n$ distinct points and a disjoint parameterized hyperplane in projective $d$-space up to projectivity---equivalently, configurations of $n$ distinct points in affine $d$-space up to translation and homothety---has a beautiful compactification introduced by Chen-Gibney-Krashen.  This variety, constructed inductively using the apparatus of Fulton-MacPherson configuration spaces, is a parameter space of certain pointed rational varieties whose dual intersection complex is a rooted tree.  This generalizes $\M_{0,n}$ and shares many properties with it.  In this paper, we prove that the normalization of the Chow quotient of $(\PP^d)^n$ by the diagonal action of the subgroup of projectivities fixing a hyperplane, pointwise, is isomorphic to this Chen-Gibney-Krashen space $T_{d,n}$.  This is a non-reductive analogue of Kapranov's famous quotient construction of $\M_{0,n}$, and indeed as a special case we show that $\M_{0,n}$ is the Chow quotient of $(\PP^1)^{n-1}$ by an action of $\Gm\rtimes\Ga$.
\end{abstract} 




\section{Introduction}

\subsection{Main result}
The spaces $T_{d,n}$ introduced by Chen-Gibney-Krashen parameterize stable rooted trees of pointed $\PP^d$s (cf., \S\ref{section:CGK}); they compactify the space of $n$ distinct points and a disjoint parameterized hyperplane in $\PP^d$ up to projective automorphisms preserving the parameterization, or equivalently, configurations of $n$ distinct points in $\mathbb{A}^d$ up to translation and homothety \cite{Chen-Gibney-Krashen}.  Since $T_{1,n} \cong \M_{0,n+1}$, a natural question is whether Kapranov's Chow quotient construction $\M_{0,n} \cong (\PP^1)^n\ChowQ \SL_2$ \cite{Kapranov-chow} extends to the Chen-Gibney-Krashen moduli spaces.  We prove that, up to normalization, this is indeed the case.

\begin{maintheorem}\label{thm:main}
For any $d\ge 1$ and $n \ge 2$, $T_{d,n}$ is isomorphic to the normalization of the Chow quotient $(\PP^d)^n\ChowQ G$, where $G\subseteq\SL_{d+1}$ is the non-reductive subgroup fixing a hyperplane pointwise.
\end{maintheorem}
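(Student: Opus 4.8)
The plan is to exhibit a finite birational morphism from $T_{d,n}$ onto the Chow quotient and then invoke the fact that a finite birational morphism from a normal variety onto a reduced variety is its normalization; since $T_{d,n}$ is smooth, this finishes the proof. First I would make the group explicit: choosing coordinates so that the distinguished hyperplane is $H=\{x_0=0\}$, the matrices in $\SL_{d+1}$ fixing each point of $H$ have the shape $\left(\begin{smallmatrix}\lambda^{-d}&0\\ w&\lambda I_d\end{smallmatrix}\right)$, so $G\cong\Gm\ltimes\Ga^{d}$ acts on the affine chart $\PP^d\setminus H\cong\mathbb{A}^d$ as the group of homotheties and translations; in particular $\dim G=d+1$, and for $n\ge 2$ a configuration of distinct points off $H$ has trivial stabilizer, so a generic orbit in $(\PP^d)^n$ has dimension $d+1$. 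Both $T_{d,n}$ and $(\PP^d)^n\ChowQ G$ contain, as a dense open subset, the configuration space $U$ of $n$ distinct points of $\mathbb{A}^d$ modulo translation and homothety: on the moduli side this is the interior by construction, while on the Chow side it is the locus of honest generic orbit closures, which are pairwise distinct $(d+1)$-cycles. This is the birational identification that I want to promote to a morphism.

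To build the morphism $\Phi\colon T_{d,n}\to(\PP^d)^n\ChowQ G$, I would form the closure $\mathcal{Z}\subseteq T_{d,n}\times(\PP^d)^n$ of the universal orbit $\{(t,g\cdot p(t)):t\in U,\ g\in G\}$ and analyze the projection $\mathcal{Z}\to T_{d,n}$. Over $U$ its fibers are the generic orbit closures, so if I can show that \emph{every} fiber, including those over the boundary, is purely of dimension $d+1$, then, because $T_{d,n}$ is smooth and hence seminormal, the fiberwise cycles assemble into a well-defined algebraic family and thereby define a morphism $T_{d,n}\to\Chow((\PP^d)^n)$, which by continuity and density of $U$ lands in the Chow quotient. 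Establishing this purity is the first substantial step: using the inductive Fulton--MacPherson description of $T_{d,n}$, I would degenerate along a one-parameter family approaching a boundary point, compute the flat (hence cycle-theoretic) limit of the orbit closure, and verify that it acquires exactly one $(d+1)$-dimensional component per vertex of the CGK dual tree, with no spurious lower- or higher-dimensional pieces.

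The second, and I expect hardest, step is injectivity of $\Phi$, which amounts to reconstructing a stable pointed tree of $\PP^d$s from its limit cycle $Z=\sum_v Z_v$. Each $Z_v$ is the orbit closure determined by the configuration on the corresponding vertex $v$, and the combinatorics of the tree is recorded by which projections $(\PP^d)^n\to(\PP^d)^{S}$ collapse each $Z_v$ and by the incidences among the $Z_v$; pushing this forward along the forgetful maps $T_{d,n}\to T_{d,|S|}$ together with the compatible projections of Chow quotients, I would recover both the tree and the moduli of the marked points on each screen, hence the original point of $T_{d,n}$. Matching broken orbits with rooted trees, uniformly in $d$, and checking that this dictionary is compatible with the cycle-theoretic boundary, is where the real work lies; this is the main obstacle. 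Granting injectivity, properness of $T_{d,n}$ makes $\Phi$ proper with closed image containing the dense set $U$, hence surjective, while injectivity makes it quasi-finite, so $\Phi$ is a proper quasi-finite—thus finite—birational surjection from the smooth, hence normal, variety $T_{d,n}$, identifying it with the normalization of $(\PP^d)^n\ChowQ G$.

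As a cross-check and a possible alternative, one could instead run the argument in reverse, using the universal family of cycles on the normalized Chow quotient to extract a family of stable pointed trees and then invoke the moduli (fine representability) property of $T_{d,n}$ to produce the inverse morphism; the two constructions are forced to agree over $U$, so either route reduces to the same boundary analysis matching degenerate orbit closures with CGK dual trees.
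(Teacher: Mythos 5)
Your overall architecture --- identify the interiors, extend the orbit-closure map to the boundary by controlling limit cycles, prove injectivity, and conclude via ``finite birational from a normal source $=$ normalization'' --- matches the paper's (which phrases the last step through Zariski's Main Theorem, an equivalent move). But the two places you flag as ``the substantial step'' and ``where the real work lies'' are exactly where the proof lives, and your plan for the first one has a concrete gap. You propose to take the closure $\mathcal{Z}\subseteq T_{d,n}\times(\PP^d)^n$ of the universal orbit, check that all fibers are purely $(d+1)$-dimensional, and invoke seminormality to get a family of cycles. Even granting purity, the scheme-theoretic fiber of $\mathcal{Z}$ over a boundary point does not automatically carry the right multiplicities to agree with the specialization of the cycle in $\Chow((\PP^d)^n)$, and ``compute the flat limit and verify there are no spurious pieces'' is precisely the computation you have not indicated how to do. The paper avoids this head-on computation entirely: it defines a candidate limit, the configuration cycle $Z(\underline{X})=\sum_v[\overline{G\pi_v(\underline{X})}]$, where $\pi_v$ is the point configuration obtained from an explicitly constructed contraction $\varphi_{L_v}:X\to\PP^d$ (a line bundle $L_v$ built from a chain of hyperplanes, with $h^0=d+1$ and vanishing higher cohomology); it proves the \emph{containment} $Z(X_{\mathbb{C}})\subseteq\lim\rho^\circ(X_K)$ by extending $\varphi_{L_v}$ over the DVR via Grauert's theorem; and it upgrades containment to equality of cycles by showing both sides have the \emph{same homology class} $\delta$. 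That last point is itself nontrivial: the class of $Z(\underline{X})$ is computed only in the maximally degenerate case (a Kapranov-style count showing every K\"unneth coefficient is $1$), and a monotonicity-under-specialization argument then forces all coefficients of $\delta$ to be $1$ as well. Without some version of this ``containment plus equal degree'' mechanism, or an honest multiplicity computation, your extension step is not complete.

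On injectivity, your proposed reconstruction via forgetful maps and projections of Chow varieties is essentially what the paper does --- but only for $d=1$, where it reduces to $T_{1,3}\cong\PP^1\to\PP^5$ and uses the embedding $\M_{0,n}\to\prod\M_{0,4}$. For $d>1$ the paper does not attempt a full reconstruction; it only needs quasi-finiteness, and it gets injectivity on each boundary divisor $T_{d,n-i+1}\times T_{d,i}$ by a direct separation argument: the two summands of the image cycle are distinguished by which diagonal loci of $(\PP^d)^n$ contain them (the ``type 0'' cycle lies where the coordinates indexed by the collided points agree, the ``type 1'' cycle does not), so equality of sums forces equality summand by summand, and one iterates down the tree. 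If you pursue the forgetful-map route for $d>1$ you would need compatible pushforward maps on Chow quotients and an embedding statement for products of forgetful maps on $T_{d,n}$, neither of which is established in the paper; the divisor-by-divisor argument is the cheaper path. Finally, note that the paper is unable to remove the normalization for $d>1$ (it does not know $\rho$ separates tangent vectors), so your closing sentence should not promise more than an identification with the normalization.
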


Put another way, this non-reductive Chow quotient normalization admits a surprisingly elegant modular interpretation.  When $d=1$ we avoid the normalization step and hence obtain a novel construction of the moduli space of stable rational curves. 
\begin{maintheorem}
For any $n \ge 3$, we have $\M_{0,n} \cong (\PP^1)^{n-1}\ChowQ (\Gm\rtimes \Ga)$.  
\end{maintheorem}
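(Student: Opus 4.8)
The plan is to deduce this statement from the Main Theorem by showing that, when $d=1$, the Chow quotient is already normal, so the normalization step is vacuous. Specializing the Main Theorem to $d=1$ and invoking the isomorphism $T_{1,n-1}\cong\M_{0,n}$, we obtain that $\M_{0,n}$ is the normalization of $(\PP^1)^{n-1}\ChowQ B$, where $B:=\Gm\rtimes\Ga\subset\SL_2$ is the stabilizer of the point $\infty\in\PP^1$, i.e. the affine group of translations and homotheties of $\mathbb{A}^1=\PP^1\setminus\{\infty\}$. Since the normalization morphism of a variety is an isomorphism exactly when the target is already normal, the entire statement reduces to proving that $(\PP^1)^{n-1}\ChowQ B$ is normal.

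To establish normality I would compare the non-reductive quotient with Kapranov's reductive one, $(\PP^1)^n\ChowQ\SL_2\cong\M_{0,n}$, which is smooth. The comparison rests on the identification $\SL_2/B\cong\PP^1$, which gives an $\SL_2$-equivariant isomorphism $\SL_2\times_B(\PP^1)^{n-1}\xrightarrow{\ \sim\ }(\PP^1)^n$, $[g,(p_1,\dots,p_{n-1})]\mapsto(gp_1,\dots,gp_{n-1},g\cdot\infty)$, under which $\SL_2$ acts by left multiplication on the first factor. Because $\SL_2\backslash(\SL_2\times_B X)=B\backslash X$, the $\SL_2$-orbits on $(\PP^1)^n$ are in natural bijection with the $B$-orbits on $(\PP^1)^{n-1}$; concretely, intersecting with the divisor $D=\{x_n=\infty\}$ carries the closure of the $\SL_2$-orbit of $(p_1,\dots,p_{n-1},\infty)$ to the closure of the $B$-orbit of $(p_1,\dots,p_{n-1})$, since the points of such an orbit lying over $\infty$ are exactly those moved by the stabilizer $B$.

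The final step is to promote this orbit correspondence to an isomorphism of Chow quotients. I would show that restricting the universal family of $\SL_2$-orbit closures to $D$ produces a flat family of $B$-invariant cycles of the expected dimension—using that a generic $\SL_2$-orbit closure meets $D$ transversally in a single reduced $B$-orbit closure—thereby defining a morphism $(\PP^1)^n\ChowQ\SL_2\to(\PP^1)^{n-1}\ChowQ B$, and construct an inverse by sweeping a $B$-orbit closure out under $\SL_2$, namely $W\mapsto\overline{\SL_2\cdot W}$. Checking that these are mutually inverse on the dense locus of closures of generic orbits, and hence everywhere by separatedness and properness of the Chow quotients, yields $(\PP^1)^{n-1}\ChowQ B\cong\M_{0,n}$. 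In particular the target is smooth, hence normal, so the normalization map from the first paragraph is an isomorphism and the theorem follows.

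The main obstacle is precisely this last upgrade: away from the open stratum of distinct points of $\mathbb{A}^1$, where everything is routine $\SL_2/B$ bookkeeping, the two families of cycles can behave badly—an $\SL_2$-orbit closure may acquire components contained in $D$ or drop dimension along boundary strata. The real content is therefore verifying that slicing by $D$ remains a well-defined morphism on the full compactified Chow quotient, i.e. that the relevant intersections stay flat and multiplicity-free across the boundary.
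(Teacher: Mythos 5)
Your route is genuinely different from the paper's, which never invokes Kapranov's reductive quotient: the paper shows directly that $\rho : T_{1,n-1}\rightarrow \Chow((\PP^1)^{n-1})$ is an embedding by composing with forgetful maps to $T_{1,3}$ and proper push-forward of cycles to $\Chow((\PP^1)^3)$, reducing everything to the three-point case, where the relevant component of the Chow variety is $\PP H^0(\mathcal{O}(1,1,1))\cong\PP^5$ and the map is a linear embedding of $T_{1,3}\cong\PP^1$. Your plan---to identify $(\PP^1)^{n-1}\ChowQ(\Gm\rtimes\Ga)$ with $(\PP^1)^n\ChowQ\SL_2\cong\M_{0,n}$ via the bundle $\SL_2\times_B(\PP^1)^{n-1}\cong(\PP^1)^n$ and slicing by $D=\{x_n=\infty\}$---is an attractive alternative, but as written it is a program rather than a proof: its entire weight rests on the assertion that intersecting the universal family of cycles with $D$, and sweeping a $B$-invariant cycle out by $\SL_2$, each define morphisms of Chow varieties, and you explicitly defer exactly this point as ``the main obstacle.'' That deferred point \emph{is} the theorem; everything before it is orbit bookkeeping on the open stratum, where both quotients are already known to agree with $M_{0,n}$.

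Part of the worry you raise can actually be dispatched: every cycle parameterized by $(\PP^1)^n\ChowQ\SL_2$ is a limit of $\SL_2$-fixed points of the Chow variety and hence is $\SL_2$-invariant; since $\SL_2$ is connected, each irreducible component of such a cycle is itself invariant, so its image under the equivariant last-coordinate projection is all of $\PP^1$ and no component ever lies inside $D$. Thus the intersection with $D$ is proper for \emph{every} cycle in the quotient, not just generically, and once both operations are known to be morphisms the identity ``on the dense locus'' does propagate to the boundary by separatedness, so boundary multiplicity-freeness is not actually needed. What is needed and not supplied is: (i) an argument that proper intersection of a well-defined family of cycles with a fixed divisor again yields a well-defined family, hence a morphism of Chow varieties (the paper only ever uses proper push-forward, for which a clean reference exists; divisor intersection at the level of Chow varieties is a more delicate operation); (ii) algebraicity of $W\mapsto\overline{\SL_2\cdot W}$ as a map of Chow varieties; and (iii) the multiplicity-one computation $\overline{\SL_2 p}\cdot D=[\overline{Bp'}\times\{\infty\}]$ for generic $p=(p',\infty)$, which is required even to see that the two maps are inverse on the open stratum. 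Until (i)--(iii) are carried out, the normality of $(\PP^1)^{n-1}\ChowQ(\Gm\rtimes\Ga)$, and hence the theorem, is not established.
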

Since $(\PP^1)^{n-1}\ChowQ\Gm$ is a toric variety, by general results of \cite{ChowToric}, this gives one more instance of the philosophy in \cite{Doran-Giansiracusa} that $\M_{0,n}$ is one additive group $\Ga$ away from being toric.

\subsection{Background and motivation}
As an ``elementary'' example of his Geometric Invariant Theory, Mumford provided a family of compactifications of the space of $n$ distinct points in projective space up to projectivity \cite[Chapter 3]{GIT}.  Kapranov later studied a new kind of quotient, the Chow quotient, defined as the closure in the relevant Chow variety of the space of algebraic cycles associated to generic orbit closures \cite{Kapranov-chow}.  His main example $(\PP^d)^n\ChowQ\SL_{d+1}$ provides a finer compactification than Mumford's of point configurations in $\PP^d$, and in the case $d=1$ this quotient is isomorphic to the ubiquitous Grothendieck-Knudsen compactification $\M_{0,n}$ \cite[Theorem 4.1.8]{Kapranov-chow} (see also \cite{Giansiracusa-Gillam}).  Thus Kapranov's spaces $(\PP^d)^n\ChowQ\SL_{d+1}$ are seen as higher-dimensional generalizations of $\M_{0,n}$; they appear in a variety of settings (e.g., \cite{Keel-Tevelev-chow, Hacking-Keel-Tevelev-hyperplane, Alexeev-weighted, Lafforgue}).

Another family of higher-dimensional generalizations of $\M_{0,n}$, the aforementioned parameter spaces $T_{d,n}$, were introduced by Chen, Gibney, and Krashen \cite{Chen-Gibney-Krashen}.  These too appear in a variety of settings (e.g., \cite{Westerland,Kim-Sato,Manin-Marcolli}), and like Kapranov's spaces they recover Grothendieck-Knusden when $d=1$.  Strikingly, however, many beautiful properties of $\M_{0,n}$ carry over to the $d>1$ case: $T_{d,n}$ is a smooth projective variety with an explicit functorial description, its modular boundary divisor has simple normal crossings and a recursive description, and its closed points parameterize stable rooted trees of pointed projective spaces, a direct generalization of stable trees of pointed projective lines.  The construction of the CGK spaces $T_{d,n}$ in \cite{Chen-Gibney-Krashen} is a subtle induction based on the Fulton-MacPherson configuration spaces \cite{Fulton-MacPherson}.  The original construction of $\M_{0,n}$ was also a subtle induction \cite{Knudsen} (see also \cite{Keel-thesis}), and it was Kapranov's alternative ``global'' constructions \cite{Kapranov-chow, Kapranov-veronese} that illuminated much of the geometry of $\M_{0,n}$ that has been explored in the past 20 years (e.g., \cite{Doran-Giansiracusa, Doran-Giansiracusa-Jensen, Castravet-Tevelev-mds,Gibney-Maclagan-quotients, Bruno-Mella, Pixton, Hu-degen}). 

In this paper, we introduce a construction of $T_{d,n}$ as a non-reductive Chow quotient of $(\PP^d)^n$.  Indeed, for the standard action of $\SL_{d+1}$ on $\PP^d$, the subgroup fixing a hyperplane pointwise is a solvable, non-reductive group $G \cong \Gm\rtimes \Ga^d$, and we show that $T_{d,n}$ is isomorphic to the normalization of $(\PP^d)^n\ChowQ G$.  It follows in particular that, up to normalization, the Chow quotient of $(\PP^d)^n$ by the subgroup $G$ is smooth with simple normal crossings boundary, as opposed to the Chow quotient by $\SL_{d+1}$.  Our proof follows closely the reasoning in \cite{Giansiracusa-Gillam,Giansiracusa-rnc}, where the second author  and Gillam reprove and generalize Kapranov's isomorphism $\M_{0,n} \cong (\PP^1)^n\ChowQ \SL_2$ by describing explicitly the union of orbit closures corresponding to points in the boundary. Note that while Mumford's GIT setup crucially requires the reductivity hypothesis and avoiding this is a rather delicate affair \cite{Doran-Kirwan, Kirwan}, Kapranov's Chow quotient setup on the other hand makes no such assumption, since it is not based on rings of invariants.

Since we rely extensively on results and constructions from Kapranov's paper \cite{Kapranov-chow}, we work over the complex numbers $\mathbb{C}$.  This hypothesis could likely be weakened if necessary by using a more modern approach to Chow varieties (e.g., \cite[Chapter 1]{Kollar-chow}), as was done in \cite{Giansiracusa-Gillam}.

\subsection{Navigating the proof}

The first step is to recognize that the parameter space $T_{d,n}$ is birational to the Chow quotient $(\PP^d)^n\ChowQ G$.  This follows from the observation that the open dense stratum in $T_{d,n}$ parameterizes $G$-orbits of configurations $p=(p_1,\ldots,p_n)\in(\PP^d)^n$ of $n$ distinct points in $\PP^d$ not lying on the parameterized hyperplane; if these points are in general position then the orbit closures $\overline{Gp}$ all have the same homology class and the map $Gp \mapsto \overline{Gp}$ explicitly identifies an open dense subset of $T_{d,n}$ with one in the Chow quotient.  See \S\ref{sec:birat} for details.

The next step, which lies at the technical heart of the paper and is covered in \S\ref{sec:regular}, is to extend the birational map $T_{d,n} \dashrightarrow (\PP^d)^n\ChowQ G$ to a regular morphism.  To do this, we use a criterion developed in \cite{Giansiracusa-Gillam}: it suffices to associate an algebraic cycle, of the same homology class as the generic orbit closure, to each boundary point of $T_{d,n}$, and then to show that this association is compatible with 1-parameter families in an appropriate sense.  To a stable tree of pointed $\PP^d$s, we associate the following cycle: for each irreducible component there is a configuration of not necessarily distinct points obtained by contracting down and projecting up to this component $\PP^d$ (\S\ref{sec:compconf}), and we consider the cycle given by the union of orbit closures over all components.  To show compatibility with 1-parameter families we reduce to the case of maximally degenerate pointed trees, i.e., the highest codimension boundary strata in $T_{d,n}$, and then use an argument based on continuity.

Finally, in \S\ref{sec:iso} we invoke Zariski's Main Theorem, so that to prove $T_{d,n}$ is isomorphic to the normalization of the Chow quotient it suffices to show that the morphism $T_{d,n} \rightarrow (\PP^d)^n\ChowQ G$ just constructed is bijective.  Surjectivity follows from continuity, and injectivity on the open stratum of $T_{d,n}$ is straightforward.  To prove injectivity on the boundary divisor we prove a certain compatibility between our morphism and the recursive structure of the boundary of $T_{d,n}$; this allows us in essence to reduce to the case $n=2$ where injectivity is once again straightforward.  

When $d=1$, by using the identification $T_{d,n} \cong \M_{0,n+1}$ and an idea from \cite{Giansiracusa-Gillam} we are able to avoid the use of Zariski's Main Theorem and reduce the question of whether our morphism is an isomorphism to the case $n=3$, namely $\PP^1 \cong T_{1,3} \rightarrow (\PP^1)^3\ChowQ(\Gm\rtimes\Ga)$, where we can directly argue that it is indeed an isomorphism.

\begin{remark}
We have so far been unable to determine whether the normalization statement in our main theorem is necessary when $d >1$ or whether it is merely an artifact of this use of Zariski's Main Theorem.  The main challenge is that the deformation theory of both $T_{d,n}$ and of Chow varieties are not well understood so it would be difficult to show that our morphism separates tangents vectors in addition to separating points.
\end{remark}

\begin{ack}
We thank Valery Alexeev, Angela Gibney, and Danny Krashen for helpful discussions.  The first author was supported in part by the NSF grant DMS-1344994 of the RTG in Algebra, Algebraic Geometry, and Number Theory, at the University of Georgia.  The second author was partially supported by the NSF grant DMS-1204440.
\end{ack}


\section{Chen-Gibney-Krashen parameter spaces}\label{section:CGK}

In this section we summarize the relevant aspects of the CGK parameter spaces $T_{d,n}$ from \cite{Chen-Gibney-Krashen} that we will need throughout this paper.  We begin by recalling the closed points of these spaces and introduce some terminology we shall use in discussing the rational varieties they parameterize and the trees associated to them.  We then list the salient properties that will be used later in the paper.

\subsection{Closed points}\label{cpts}

Recall that the closed points of $\M_{0,n}$ are in bijection with nodal unions of $\PP^1$s whose dual graph is a tree, such that the $n$ marked points are distinct and non-singular and each component has at least three special points (nodes or marked points); these are considered up to isomorphism of pointed curves.  We can view the points of $\M_{0,n+1}$ as the same type of object, but now one vertex of the tree is distinguished, namely, the vertex whose associated component carries the $(n+1)$st marked point (which we can, if desired, view as a hyperplane in the corresponding $\PP^1$).  Such trees are called \emph{rooted}.  This is the perspective of the isomorphism $T_{1,n} \cong \M_{0,n+1}$ \cite[Proposition 3.4.3]{Chen-Gibney-Krashen}.

By \cite[Theorem 3.4.4]{Chen-Gibney-Krashen}, the closed points of $T_{d,n}$ parametrize ``$n$-pointed stable rooted trees of $d$-dimensional projective spaces.''  These are $n$-pointed rational varieties $X$, possibly reducible but with simple normal crossings, whose irreducible components $X_i \subseteq X$ are each equipped with a closed immersion $\PP^{d-1} \hookrightarrow X_i$ and an isomorphism $X_i \cong \Bl_{k_i} \PP^d$ to the blow-up of $\PP^d$ at a collection of $k_i$ points, such that the conditions listed below hold.  Before stating these conditions, we introduce some terminology.  For each irreducible component $X_i \subseteq X$, we write $\overline{X}_i$ for the image of the composition with the blow-down morphism: \[X_i \cong \Bl_{k_i} \PP^d \rightarrow \PP^d =: \overline{X_i}.\]  By the \emph{blown-up points of $\overline{X_i}$} we mean the points in $\overline{X_i}$ that are the images of the $k_i$ exceptional divisors.  We refer to these points and the image of the marked points under the above composition $X_i \rightarrow \overline{X_i}$ as the \emph{special points} of $\overline{X_i}$.  Finally, we denote the image of the map $\PP^{d-1} \hookrightarrow X_i$ by $H_i$.  Here are now the conditions that characterize when this data forms a closed point of $T_{d,n}$:
\begin{enumerate}
\item the dual intersection complex of $X$ is a tree graph;
\item any nonempty intersection of two components, $X_i \cap X_j$, is an exceptional divisor in one component, say $X_i \cong \Bl_{k_i}\PP^d$, and in the other it is $H_j \subseteq X_j$;
\item there is a unique component $X_0$ such that the number of blown-up points $k_0$ equals the number of distinct components meeting $X_0$; for all other components $X_i$, the number $k_i$ is the number of distinct components meeting $X_i$ minus one;
\item each $\overline{X_i}$ has at least two special points;
\item the image of $H_i \subseteq X_i$ under the map $X_i \rightarrow \overline{X_i}$ is a hyperplane disjoint from the special points.
\end{enumerate}
 We shall abuse notation slightly and refer to $H_i \subset X_i$ as a hyperplane, whereas technically its image under the isomorphism $X_i \cong \Bl_{k_i}\PP^d$ is the strict transform of a hyperplane.    Note that the special component $X_0 \subseteq X$ determines a root of the dual graph tree, so we call it the \emph{root component}; we call the corresponding hyperplane $H_0 \subseteq X_0$ the \emph{root hyperplane}.  This determines a partial order $\le$ on the vertices of the dual graph tree, with the root being the smallest element.
\begin{definition}\label{family}
If vertices $v,v'$ in a rooted tree satisfy $v < v'$, then we say that $v'$ is a \emph{descendent} of $v$ and $v$ is an \emph{ancestor} of $v'$.  If there is no vertex $w$ satisfying $v < w < v'$ then we say that $v'$ is a \emph{daughter} of $v$ and $v$ is the \emph{parent} of $v'$.
\end{definition}
Concretely, then, each irreducible component $X_v \subseteq X$ is isomorphic to $\Bl_{k_v}\PP^d$ where $k_v$ is the number of daughters of $v$; if $v$ is not the root vertex then the hyperplane $H_v \subseteq X_v$ is identified with an exceptional divisor in the parent component of $X_v$.  

We sometimes denote the data of a stable $n$-pointed tree of $\PP^d$s by
\[\underline{X} = (H_{v_0} \hookrightarrow X = \cup_{v\in V}X_v,q_1,\ldots,q_n) \in T_{d,n},\] to indicate that $X$ is the rational variety itself, $V$ is the vertex set of the dual graph, $v_0\in V$ is the root vertex, $H_{v_0}$ is the root hyperplane, and $q_i$ are the marked point.

These $n$-pointed rational trees $\underline{X}$ are considered up to isomorphisms compatible with the $n$ marked points and the embedding of the root hyperplane.  In particular, an automorphism must fix each marked point and each point of the root hyperplane.  Conditions (1-3) imply that an automorphism must send each component $X_i$ to itself and the restriction to $X_i$ must be induced by an automorphism of the blow-down $\overline{X_i}$.  We claim conditions (4-5), the ``stability conditions,'' are then equivalent to the statement that there are no non-trivial automorphisms.  By induction it suffices to check this for the root component, so the preceding claim is equivalent to the claim that there are no non-trivial automorphisms of $\PP^d$ fixing two points and fixing a disjoint hyperplane pointwise, whereas there is a non-trivial automorphism fixing one point and a disjoint hyperplane pointwise.  Note that fixing a hyperplane $H \subset \PP^d$ pointwise is equivalent to fixing $d$ points which span it together with an additional generic point in $H$; by choosing explicit coordinates (cf., Lemma \ref{lem:ptfix}) one sees that there is a $\Gm$-stabilizer for the configuration of these $d+1$ points together with any additional point outside $H$, so having only one special point is unstable, whereas adding any distinct second point outside $H$ kills this stabilizer and results in a stable configuration.

The closed points of $T_{d,n}$ are stratified by the corresponding dual graphs in the usual way, with the dense open stratum corresponding to a tree consisting of a single vertex; the associated varieties are simply $\PP^d$ with $n$ distinct marked points and a disjoint hyperplane $\PP^{d-1} \subseteq \PP^d$.

\begin{example}\label{ex1}
We illustrate here a closed point in the boundary of $T_{2,4}$. The root vertex $v_0$ has daughters $v_1$ and $v_2$.  The components $X_{v_1} \cong \PP^2 \cong X_{v_2}$ each have two marked points $q_i$ while the root component $X_{v_0} \cong \Bl_2\PP^2$ has no marked points but $\overline{X_{v_0}} =\PP^2$ has two special points.
\begin{figure}[h!]
\includegraphics[scale=0.5]{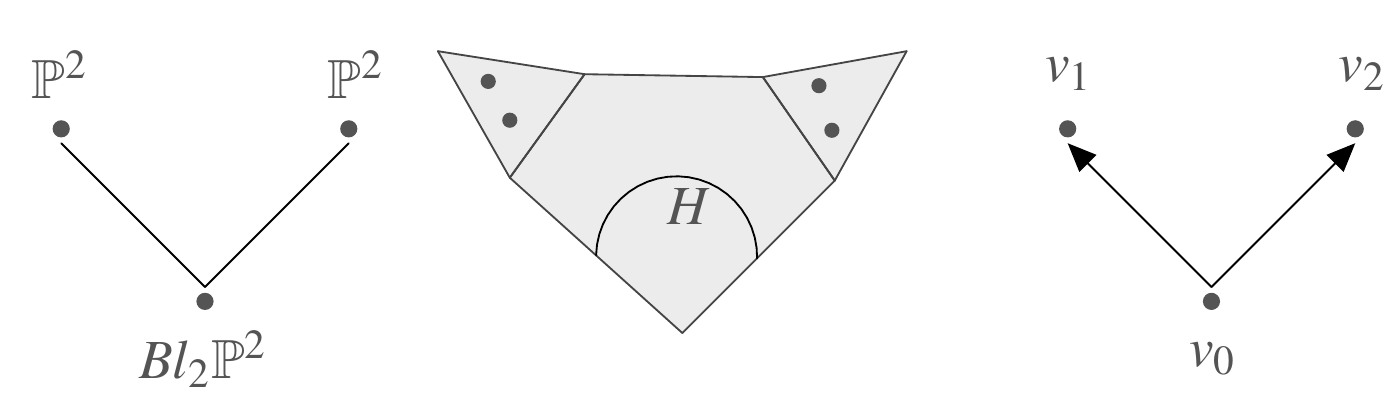}
\caption{A boundary point of $T_{2,4}$.}
\end{figure}
\end{example}

By varying the location of the two special points in each component we get a 3-dimensional irreducible component of a boundary stratum in the $5$-dimensional moduli space $T_{2,4}$; this component is isomorphic to $T_{2,2} \times T_{2,2} \times T_{2,2}$ (see below).

\subsection{Basic properties}\label{section:props}
The following results will be useful in what follows.
\begin{itemize}
\item $T_{d,n}$ is a smooth, projective, rational variety of dimension $dn - d - 1$ \cite[Corollary 3.4.2]{Chen-Gibney-Krashen};
\item $T_{1,3} \cong \PP^1$, $T_{d,2} \cong \PP^{d-1}$, and $T_{1,n} \cong \M_{0,n+1}$ \cite[Propositions 3.4.1 and 3.4.3]{Chen-Gibney-Krashen};
\item the cycle class map $A^*(T_{d,n}) \rightarrow H^{2*}(T_{d,n},\mathbb{Z})$ is an isomorphism \cite[Corollary 7.3.4]{Chen-Gibney-Krashen};
\item the boundary divisor in $T_{d,n}$, parameterizing stable trees with more than one vertex, has simple normal crossings; its components are isomorphic to products $T_{d,n-i+1}\times T_{d,i}$ and parameterize stable trees where a collection of $i$ points have collided and are placed on a non-root component \cite[\S1.1]{Chen-Gibney-Krashen} (see Figure 2 for an example);
\item there is a ``universal family'' $T^+_{d,n} \rightarrow T_{d,n}$ of stable rooted trees of $d$-dimensional projective spaces (cf., the proof of \cite[Theorem 3.4.4]{Chen-Gibney-Krashen}).
\end{itemize}


\begin{remark}\label{rem:unifam}
The quotes in this last item require explanation.  The morphism $T^+_{d,n} \rightarrow T_{d,n}$ is flat and proper, with fibers given by the rational varieties $X$ described in \S\ref{cpts}.  Thus, given any morphism $Z \rightarrow T_{d,n}$ one can pull back this universal family to get a flat proper family of stable rooted trees of $d$-dimensional projective spaces over $Z$.  However, it is not currently known whether all such families over $Z$ are the pull-back of $T^+_{d,n}$ along a (unique) morphism $Z \rightarrow T_{d,n}$.
\end{remark}

\begin{remark}
The isomorphism $T_{d,2} \cong \PP^{d-1}$ admits a nice elementary geometric interpretation.  Given a pair of distinct points $p,q\in \PP^d$ and a disjoint hyperplane $H \hookrightarrow \PP^d$, the group of projectivities fixing $p$ and each point of $H$ is a copy of $\Gm$ which acts on the line $\overline{pq} \cong \PP^1$ by the usual scaling action with $p$ identified as the origin in this line and the intersection with $H$ as the point at infinity.  It follows that the points of $T_{d,2}$ are in natural bijection with the lines in $\PP^d$ through $p$, which of course is $\PP^{d-1}$.
\end{remark}

\begin{figure}[h!]\label{fig:strata}
\includegraphics[scale=0.5]{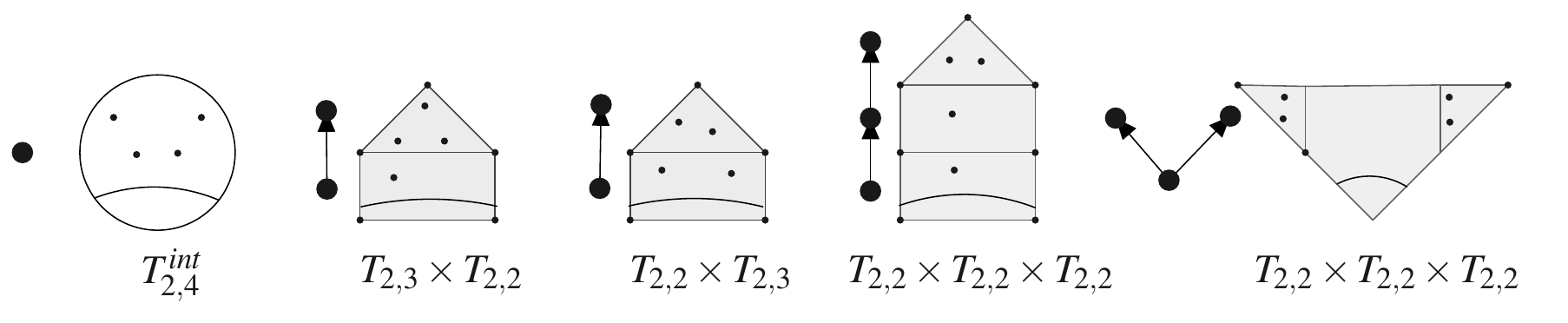}
\caption{The stratification for $T_{2,4}$.}
\end{figure}


\section{A birational map from CGK to Chow}\label{sec:birat}

In this section we show that there is a natural birational map $\rho : T_{d,n} \dashrightarrow (\PP^d)^n\ChowQ G$ by identifying common open subsets of these two irreducible varieties.

\subsection{The group and its action}
Fix integers $n \ge 2$ and $d \ge 1$, and consider the natural action of $\SL_{d+1}$ on $\mathbb{P}^d$.  Let $G\subseteq \SL_{d+1}$ be the subgroup fixing, pointwise, the hyperplane $H \subseteq \PP^d$ defined by the vanishing of the first coordinate.  There is a diagonal action of $G$ on $(\PP^d)^n$, the space of $n$ points in $\PP^d$.  We use the following elementary linear algebraic observation throughout:

\begin{lemma}\label{lem:ptfix}
An element of $\SL_{d+1}$ fixes $H$ pointwise if and only if it fixes $d+1$ general points of $H$.  These can be taken to be the $d$ coordinate points $e_2,\ldots,e_{d+1}$ and the point $e_2+\cdots +e_{d+1}$. 
\end{lemma}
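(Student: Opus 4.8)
The forward implication is immediate: each of the $d+1$ listed points lies in $H$ (all have vanishing first coordinate), so any projectivity fixing $H$ pointwise certainly fixes them. The content is the converse, and the plan is to run the standard fact that a projectivity fixing a projective frame must be trivial, but carried out explicitly enough to conclude pointwise fixing of \emph{all} of $H$, not merely of the frame. The key observation is that the $d$ coordinate points $e_2,\ldots,e_{d+1}$ together with the unit point $e_2+\cdots+e_{d+1}$ are precisely a projective frame of $H \cong \PP^{d-1}$, which is what ``$d+1$ general points'' means here: any $d$ of them are projectively independent.

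Concretely, I would write $g=(g_{ij})\in\SL_{d+1}$ for a lift of the given projectivity, using coordinates $[x_1:\cdots:x_{d+1}]$ with $H=\{x_1=0\}$. First I would exploit the conditions at the coordinate points: for each $i\in\{2,\ldots,d+1\}$ the vector $g e_i$ is the $i$-th column of $g$, and the requirement that $g e_i$ be projectively equal to $e_i$ forces this column to be a scalar multiple $\lambda_i e_i$. In particular every entry $g_{ji}$ with $j\ne i$ and $i\ge 2$ vanishes; reading off the top row shows $g_{1i}=0$ for $i\ge 2$. Thus $g$ has block shape with arbitrary first column, vanishing first row off the $(1,1)$ slot, and lower-right block equal to $\operatorname{diag}(\lambda_2,\ldots,\lambda_{d+1})$.

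Next I would feed in the unit point: a direct computation gives $g\cdot(0,1,\ldots,1)=(0,\lambda_2,\ldots,\lambda_{d+1})$, and demanding that this be projectively proportional to $(0,1,\ldots,1)$ forces $\lambda_2=\cdots=\lambda_{d+1}=:\lambda$. At this stage the lower-right block is exactly $\lambda I_d$, so for any $[0:x_2:\cdots:x_{d+1}]\in H$ one computes $g\cdot(0,x_2,\ldots,x_{d+1})=(0,\lambda x_2,\ldots,\lambda x_{d+1})$, which is the same point of $\PP^d$. Hence $g$ fixes $H$ pointwise, as desired. The same argument applies verbatim to any $d+1$ points in general position, since after a change of coordinates on $H$ they may be taken to be this standard frame.

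There is essentially no serious obstacle here; the one point demanding care is the distinction between fixing a point as a vector and fixing it projectively. The coordinate-point conditions pin down each column only up to its own scalar $\lambda_i$, and it is exactly the role of the $(d+1)$-st unit point to eliminate the relative scaling among these, collapsing the diagonal block to a single scalar. This is why a frame of $d+1$ points, rather than merely the $d$ coordinate points, is required, and it is the general-position (projective-frame) hypothesis that makes the collapse legitimate.
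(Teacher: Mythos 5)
Your proof is correct: the column-by-column computation at the coordinate points, followed by using the unit point to collapse the diagonal block to a single scalar $\lambda I_d$, is exactly the ``direct computation'' the paper has in mind (it states the lemma as an elementary observation without proof, and the matrix form in Proposition \ref{prop:matrix} confirms your block shape). Your care in distinguishing projective from vector fixing, and your remark that the frame condition is what eliminates the relative scalars $\lambda_i$, are precisely the points that make the argument work.
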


For instance, this yields an explicit description of the group $G$:

\begin{proposition}\label{prop:matrix}
The subgroup fixing a hyperplane pointwise is a semidirect product $G \cong \Gm\rtimes \Ga^d$ of the multiplicative group and $d$ copies of the additive group.  For the hyerplane $H$ defined above, it consists of matrices of the form
\[\left(\begin{array}{cccc}t^{-d} &  0 & \cdots & 0 \\s_1 & t &  & 0 \\
\vdots &  & \ddots &  \\s_d & 0  & \cdots & t \end{array} 
\right)\] 
for $s_i \in \mathbb{C}$ and $t\in\mathbb{C}^*$.
\end{proposition}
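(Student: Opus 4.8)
The plan is to leverage Lemma \ref{lem:ptfix} to convert the pointwise-fixing condition on the hyperplane $H = \{x_1 = 0\}$ into the finite condition that an element $M \in \SL_{d+1}$ fix the $d+1$ points $[e_2], \ldots, [e_{d+1}]$ and $[e_2 + \cdots + e_{d+1}]$ of $\PP^d$, and then to read off the matrix shape by elementary linear algebra. The key point to keep in mind throughout is that ``fixing a point of $\PP^d$'' means fixing it only up to a nonzero scalar, so each condition becomes an eigenvector condition rather than an equality of vectors, and this is where the bookkeeping must be done carefully.

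First I would observe that $M[e_j] = [e_j]$ for $j = 2, \ldots, d+1$ is equivalent to $Me_j = \lambda_j e_j$ for some $\lambda_j \in \mathbb{C}^*$; since $Me_j$ is the $j$-th column of $M$, this forces the $j$-th column to equal $\lambda_j e_j$. Hence $M$ has an arbitrary first column $(a, s_1, \ldots, s_d)^{T}$, diagonal entries $a, \lambda_2, \ldots, \lambda_{d+1}$, and all remaining entries zero. Next I would impose the remaining condition that $M$ fix $[e_2 + \cdots + e_{d+1}]$. Since this vector has vanishing first coordinate, applying $M$ yields $\lambda_2 e_2 + \cdots + \lambda_{d+1} e_{d+1}$, and proportionality to $e_2 + \cdots + e_{d+1}$ forces $\lambda_2 = \cdots = \lambda_{d+1}$; writing $t \in \mathbb{C}^*$ for the common value, the determinant condition $\det M = a\,t^{d} = 1$ gives $a = t^{-d}$. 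This produces exactly the displayed matrix form, with $s_i = s_i$ the free parameters.

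Finally, to exhibit the semidirect product structure, I would identify $\Ga^d$ with the subgroup $\{t = 1\}$ of matrices with trivial diagonal block, and $\Gm$ with the diagonal subgroup $\{s_1 = \cdots = s_d = 0\}$ consisting of the matrices $\operatorname{diag}(t^{-d}, t, \ldots, t)$. The assignment $M \mapsto t$ is a group homomorphism $G \to \Gm$ (as one sees from a one-line block multiplication), its kernel is precisely $\{t=1\} \cong \Ga^d$, which is therefore normal, and the diagonal copy of $\Gm$ splits this surjection. A direct block computation of the group law then shows that $\Gm$ acts on $\Ga^d$ by scaling, so $G \cong \Gm \rtimes \Ga^d$.

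I do not expect a serious obstacle here, as the statement is essentially a linear-algebra computation; the only point requiring genuine care is the consistent passage between the projective fixing conditions and their matrix (eigenvector) translations, together with tracking the $\SL$ normalization that pins down the top-left entry as $t^{-d}$.
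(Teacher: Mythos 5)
Your proof is correct and is exactly the ``direct computation from Lemma \ref{lem:ptfix}'' that the paper's one-line proof alludes to: translate the pointwise-fixing condition into eigenvector conditions at $e_2,\ldots,e_{d+1}$ and $e_2+\cdots+e_{d+1}$, read off the matrix shape, use $\det=1$ to get the $t^{-d}$ entry, and then exhibit the evident splitting $\Gm\ltimes\Ga^d$. You have simply written out the details the authors omitted, so there is nothing to add.
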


\begin{proof}
The first claim follows from the second, and the second follows by direct computation from Lemma \ref{lem:ptfix}.
\end{proof}

\subsection{Quotient of the generic locus}
Let $U \subseteq (\PP^d)^n$ denote an arbitrarily small $G$-invariant open locus, corresponding to points in general position, on which $G$ acts freely.

\begin{proposition}\label{prop:openquot}
The quotient $U/G$ is naturally an open, dense subvariety of $T_{d,n}$.
\end{proposition}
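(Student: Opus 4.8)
The plan is to identify both sides as orbit spaces, and then to promote the resulting set-theoretic bijection to an isomorphism of varieties using the moduli interpretation of $T_{d,n}$ together with Zariski's Main Theorem.

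First I would pin down the bijection on points. By the description of closed points in \S\ref{cpts}, the dense open stratum $T^{\circ}_{d,n}\subseteq T_{d,n}$ of single-vertex trees parameterizes data $(\PP^d, H, q_1,\ldots,q_n)$ with the $q_i$ distinct and disjoint from $H$, taken up to isomorphisms fixing $H$ pointwise and carrying marked points to marked points. By Proposition \ref{prop:matrix} the group of such isomorphisms is precisely $G$, so the closed points of $T^{\circ}_{d,n}$ are in natural bijection with the $G$-orbits on the configuration space $V=\{(q_1,\ldots,q_n)\in(\PP^d\setminus H)^n : q_i \text{ distinct}\}$. A direct computation in the affine coordinates of Lemma \ref{lem:ptfix} shows that on $V$ the element $(t,s)\in G$ acts by $q_i\mapsto t^{d+1}q_i+t^d s$, i.e.\ by homothety-and-translation; for $n\ge 2$, subtracting two distinct marked points forces the stabilizer of any configuration to be the central $\mu_{d+1}=\ker(G\to\PGL_{d+1})$, which acts trivially on all of $(\PP^d)^n$. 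Thus the effective quotient group $\bar G=G/\mu_{d+1}$ acts freely on $V$, and since $U\subseteq V$ is open and $G$-invariant, the orbit set $U/G=U/\bar G$ sits as a subset of $T^{\circ}_{d,n}$.

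Next I would realize this bijection algebraically. Because $\Gm$ and $\Ga$ are special groups, so is the solvable group $\bar G\cong\Gm\ltimes\Ga^d$; combined with freeness and properness of the $\bar G$-action on the general-position locus $U$, this yields a geometric quotient $U/G$ that is a smooth variety with $U\to U/G$ a Zariski-locally trivial $\bar G$-torsor, of dimension $dn-(d+1)=dn-d-1=\dim T_{d,n}$. The trivial family $U\times\PP^d\to U$, equipped with the $n$ tautological sections $q_i$ and the constant relative hyperplane $H$, is $G$-equivariant and is a family of stable single-vertex pointed trees of $\PP^d$s; descending it along the torsor $U\to U/G$ produces such a family over $U/G$, which by the moduli property of $T_{d,n}$ induces a morphism $\varphi\colon U/G\to T_{d,n}$ whose image lies in $T^{\circ}_{d,n}$ and which on points recovers the bijection above.

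Finally I would check that $\varphi$ is an open immersion with dense image. It is injective by construction and dominant onto a full-dimensional, hence dense, subset of the irreducible variety $T_{d,n}$, so it is birational; since $T_{d,n}$ is smooth and therefore normal, Zariski's Main Theorem (an injective birational morphism to a normal variety is an open immersion) identifies $U/G$ with an open subvariety of $T_{d,n}$. Density follows since $U$ is dense in $V$ and $T^{\circ}_{d,n}$ is dense in $T_{d,n}$. I expect the genuine obstacle to lie in the algebraic step rather than the point-count: since $G$ is non-reductive, GIT is unavailable, so I must establish existence and Zariski-local triviality of the geometric quotient $U/G$ by hand (via the special-group property and properness of the free $\bar G$-action) and descend the equivariant $\PP^d$-family through this quotient to obtain a bona fide morphism to $T_{d,n}$; the passage from bijection to isomorphism then rests on the normality of $T_{d,n}$ feeding into Zariski's Main Theorem.
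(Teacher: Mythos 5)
Your overall strategy agrees with the paper's: identify the open single-vertex stratum of $T_{d,n}$ with configurations of $n$ distinct points off $H$ modulo $G$, prove injectivity by producing a projectivity fixing $H$ pointwise (hence lying in $G$) between any two configurations giving isomorphic pointed trees, observe the image is open and dense, and upgrade the resulting bijection to an isomorphism. Your stabilizer computation in the coordinates of Lemma \ref{lem:ptfix} (the kernel $\mu_{d+1}=\ker(G\to\PGL_{d+1})$ is the whole stabilizer once $n\ge 2$) is a welcome precision that the paper elides by simply saying $G$ ``acts freely'' on $U$. Your endgame via Zariski's Main Theorem (injective birational morphism to the smooth, hence normal, $T_{d,n}$) is also fine, and is only cosmetically different from the paper's device of shrinking $U$ until the birational map is an isomorphism onto its image.

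The one step that does not survive contact with the paper is the construction of the morphism $\varphi\colon U/G\to T_{d,n}$. You descend the trivial family $U\times\PP^d\to U$ with its sections along the torsor $U\to U/G$ and then invoke ``the moduli property of $T_{d,n}$'' to get $\varphi$. But Remark \ref{rem:unifam} states explicitly that it is \emph{not} known whether an arbitrary family of stable rooted trees over a base $Z$ is pulled back from $T^+_{d,n}$ along a (unique) morphism $Z\to T_{d,n}$; the quotation marks around ``universal family'' are there precisely because this universal property is unavailable. So as written this step is a genuine gap. The paper avoids it by running the construction in the opposite direction: it produces a morphism $\psi\colon U\to T_{d,n}$ directly from the configuration (using the explicit construction of the interior of $T_{d,n}$ in \cite{Chen-Gibney-Krashen}), checks that $\psi$ is $G$-invariant, and only then descends it through the quotient $U\to U/G$ --- descending a $G$-invariant \emph{morphism} needs no universal property of the target. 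Your argument is repaired by the same reversal, and the rest of your proof then goes through; relatedly, you assert rather than prove properness of the $\bar G$-action on $U$, but since $U$ may be shrunk at will, existence of the geometric quotient on a dense invariant open subset (Rosenlicht) suffices and this is not a serious issue.
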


\begin{proof}
A configuration \[p = (p_1,\ldots,p_n) \in U \subseteq (\PP^d)^n\] of generic points in $\PP^d$ necessarily consists of distinct points that are disjoint from the hyperplane $H\subseteq \PP^d$, so $p$ determines a point in the interior of the moduli space $T_{d,n}$, i.e., a stable rooted tree of pointed projective spaces whose graph has only one vertex.  Since this association is algebraic, we have a natural map $\psi : U \rightarrow T_{d,n}$.  Since the $G$-action fixes $H$ pointwise, all configurations in the orbit $Gp$ are isomorphic as stable rooted trees of pointed projective spaces and hence correspond to the same point of $T_{d,n}$; thus, $\psi$ is $G$-invariant and induces a morphism $\overline{\psi} : U/G \rightarrow T_{d,n}$.  

We first observe that $\overline{\psi}$ is injective.  Indeed, if two configurations $p,p'\in U$ yield isomorphic stable trees, then since they both correspond to points in $\PP^d$ with the same root hyperplane $H\subseteq \PP^d$, there must be a projective automorphism, hence matrix $g\in\SL_{d+1}$, sending $p$ to $p'$ and fixing the hyperplane $H$ pointwise; the latter condition implies that $g\in G$, so $p$ and $p'$ are in the same $G$-orbit.  Next, we note that the image of $\overline{\psi}$ is manifestly open and dense in the irreducible variety $T_{d,n}$.  Since we are working with varieties over $\mathbb{C}$, this implies that $\overline{\psi}$ is birational, hence by shrinking $U$ if necessary we can assume that $\overline{\psi}$ is an isomorphism onto its open, dense image.
\end{proof}

\subsection{Compactifying with the Chow quotient}\label{subsection:delta}
The orbit closures $\overline{Gp} \subseteq (\PP^d)^n$ for $p\in U$, are all of the same dimension $d+1$, and by shrinking $U$ if necessary we can assume they all have the same homology class \[\delta := [\overline{Gp}]\in H_{2d+2}((\PP^d)^n,\mathbb{Z}).\]  We then have an open immersion \[U/G \hookrightarrow \Chow((\PP^d)^n,\delta)\] into the Chow variety parameterizing effective algebraic cycles with the homology class $\delta$ of the generic orbit closure.  By definition, the Chow quotient $(\PP^d)^n\ChowQ G$ is the closure $\overline{U/G}$ in this embedding \cite[Definition 0.1.7]{Kapranov-chow}.  Note that, as opposed to Mumford's GIT quotients, these Chow quotients make no assumption that the group acting is reductive.

\begin{corollary}\label{cor:BiratCGK}
The Chow quotient $(\PP^d)^n\ChowQ G$ is an irreducible projective variety birational to the Chen-Gibney-Krashen moduli space $T_{d,n}$.
\end{corollary}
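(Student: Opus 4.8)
The plan is to deduce all three assertions---irreducibility, projectivity, and birationality with $T_{d,n}$---directly from the preceding results by exhibiting $U/G$ as a dense open subvariety common to both $T_{d,n}$ and the Chow quotient. The substantive work has already been carried out: Proposition \ref{prop:openquot} identifies $U/G$ with a dense open subvariety of $T_{d,n}$, and the discussion immediately preceding the corollary ensures that, after shrinking $U$, the orbit closures $\overline{Gp}$ all share a single homology class $\delta$, so that $U/G$ embeds as a locally closed subvariety of $\Chow((\PP^d)^n,\delta)$.

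First I would establish irreducibility. Since $T_{d,n}$ is a smooth projective rational variety, hence irreducible, and $U/G$ is a dense open subvariety of it by Proposition \ref{prop:openquot}, the variety $U/G$ is itself irreducible. By construction the Chow quotient $(\PP^d)^n\ChowQ G$ is the closure $\overline{U/G}$ inside the Chow variety, and the closure of an irreducible subset is irreducible; thus $(\PP^d)^n\ChowQ G$ is irreducible.

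Next, projectivity follows from the classical fact that the Chow variety $\Chow((\PP^d)^n,\delta)$ is projective. The Chow quotient is by definition a closed subvariety---namely a closure---of this projective variety, hence is itself projective.

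Finally, for birationality I would observe that $U/G$ sits as a dense open subvariety inside both $T_{d,n}$ (by Proposition \ref{prop:openquot}) and $(\PP^d)^n\ChowQ G$ (by the defining construction as a closure). Two irreducible varieties sharing a common dense open subvariety are birational, so the identity on $U/G$ furnishes the desired birational map $\rho : T_{d,n} \dashrightarrow (\PP^d)^n\ChowQ G$. There is no genuine obstacle at this stage: the delicate points---the freeness of the $G$-action on $U$, the injectivity of $\overline{\psi}$, and the constancy of the homology class of the generic orbit closure---were all dispatched in Proposition \ref{prop:openquot} and the discussion preceding the corollary, so the corollary is essentially a matter of assembling them. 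The real content, and the genuine difficulty, is deferred to the later sections, where one must extend $\rho$ to a regular morphism and prove it is an isomorphism onto the normalization.
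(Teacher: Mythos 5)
Your argument is correct and matches the paper's own proof essentially line for line: irreducibility of the closure of the irreducible $U/G$, projectivity inherited from the Chow variety, and birationality via the common dense open subvariety $U/G$ supplied by Proposition \ref{prop:openquot} and the definition of the Chow quotient. No further comment is needed.
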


\begin{proof}
The Chow quotient is irreducible since $U/G$ is irreducible, and it is projective since the Chow variety is projective.  By Proposition \ref{prop:openquot} and the definition of the Chow quotient, we have open immersions
\[T_{d,n} \hookleftarrow U/G \hookrightarrow (\PP^d)^n\ChowQ G\]
with dense image, hence $T_{d,n}$ and $(\PP^d)^n\ChowQ G$ are birational.
\end{proof}

\begin{definition}
We shall refer to the birational map
\[\rho : T_{d,n} \dashrightarrow (\PP^d)^n\ChowQ G\] 
induced by Corollary \ref{cor:BiratCGK} as the \emph{CGK-to-Chow map}.
\end{definition}

The CGK-to-Chow map $\rho$ is the focus of the rest of this paper.


\section{The CGK-to-Chow map is a regular morphism}\label{sec:regular}

In this section we prove that the birational map $\rho : T_{d,n} \dashrightarrow (\PP^d)^n\ChowQ G$ constructed in the previous section is a regular morphism.  We do this by taking 1-parameter families in the parameter space from the interior to the boundary and defining the image of their limit to be the limit of their image.  We begin by studying the homology classes of various orbit closures and then describe a construction that associates to any closed point of $T_{d,n}$ a cycle of the appropriate homology class obtained as a sum of orbit closure cycles.  With this construction in place, we conclude by showing that generic orbit closure cycles degenerate in the Chow variety to these sums in a manner compatible with the way pointed, rooted $\PP^d$s degenerates to stable trees of such objects in $T_{d,n}$. 

\subsection{Homology classes of orbit closures}

We wish to compute the homology class \[[\overline{Gp}] \in H_{2(d+1)}((\PP^d)^n,\mathbb{Z})\] of the closure of the orbit of various configurations of points $p = (p_1,\ldots,p_n)$, when this orbit is full-dimensional.  For instance, for a generic configuration this is the class $\delta$ from \S\ref{subsection:delta} determining the component of the Chow variety into which our Chow quotient embeds.

\begin{proposition}\label{prop:fulldim}
The orbit $Gp$ has full dimension $d+1$ if and only if the support of the configuration $p \in (\PP^d)^n$ contains at least two distinct points not lying on the hyperplane $H\subseteq\PP^d$.
\end{proposition}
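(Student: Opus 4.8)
The plan is to translate full-dimensionality of the orbit into a finiteness statement about the stabilizer, and then to compute that stabilizer explicitly using the matrix description from Proposition \ref{prop:matrix}. Since $G \cong \Gm \rtimes \Ga^d$ is connected of dimension $d+1$, the orbit-stabilizer relation gives $\dim Gp = (d+1) - \dim \operatorname{Stab}_G(p)$, so the orbit is full-dimensional precisely when $\operatorname{Stab}_G(p)$ is finite. Thus the whole proposition reduces to showing that $\operatorname{Stab}_G(p)$ is zero-dimensional if and only if at least two distinct points of the configuration lie off $H$.

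For the reduction, I would first observe that every $p_i$ lying on $H$ is fixed by all of $G$, by the very definition of $G$, so such points impose no condition; hence $\operatorname{Stab}_G(p) = \bigcap_{p_i \notin H} \operatorname{Stab}_G(p_i)$. Next I would compute the stabilizer of a single point off $H$. Writing such a point as $[1 : y_1 : \cdots : y_d]$ and letting $g \in G$ have parameters $(t, s_1, \ldots, s_d)$ as in Proposition \ref{prop:matrix}, the fixed-point equation $g \cdot [1:y_1:\cdots:y_d] = [1:y_1:\cdots:y_d]$ becomes, after clearing the projective scaling (the top coordinate forces the scalar to be $t^{-d}$), the system $s_j = (t^{-d} - t) y_j$ for $j = 1, \ldots, d$. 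For each $t \in \Gm$ this determines the $s_j$ uniquely, so $\operatorname{Stab}_G(p_i)$ is a one-dimensional subgroup.

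The two directions then fall out of this computation. If the configuration has at most one distinct point off $H$, then $\operatorname{Stab}_G(p)$ is either all of $G$ (when every point lies on $H$, so that the orbit is a single point) or the one-dimensional group just described (when all off-$H$ points coincide); in either case it is positive-dimensional, so the orbit is not full-dimensional. If instead there are two distinct points $y, y'$ off $H$, then an element of $\operatorname{Stab}_G(p)$ must satisfy $s_j = (t^{-d}-t)y_j$ and $s_j = (t^{-d}-t)y'_j$ simultaneously, forcing $(t^{-d}-t)(y_j - y'_j) = 0$ for all $j$; since $y \neq y'$ this gives $t^{d+1} = 1$ and then $s_j = 0$ for all $j$, so the stabilizer is the finite group of $(d+1)$-st roots of unity and the orbit is full-dimensional.

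I do not expect a genuine obstacle here: the content is entirely the orbit-stabilizer reduction together with the elementary linear algebra made available by Proposition \ref{prop:matrix}. The only point requiring a little care is the bookkeeping in the case of at most one distinct off-$H$ point, where one must separately treat the possibility that every marked point lies on $H$ (giving a stabilizer equal to all of $G$, hence a zero-dimensional, non-full orbit) and the possibility that several distinct marked points collide at a single location off $H$ (giving the one-dimensional stabilizer, hence a $d$-dimensional, still non-full, orbit).
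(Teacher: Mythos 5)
Your proof is correct and follows essentially the same route as the paper's: reduce to the dimension of the stabilizer and compute it via the explicit matrix form of $G$ from Proposition \ref{prop:matrix}. The only difference is that the paper normalizes one off-$H$ point to $(1:0:\cdots:0)$ to shorten the computation, whereas you carry out the general calculation (and explicitly treat the all-points-on-$H$ case, which the paper leaves implicit); both yield the same conclusion.
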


\begin{proof}
The orbit is full-dimensional if and only if the stabilizer is zero-dimensional.  By choosing, without loss of generality, one of the $p_i$ to be the coordinate point $(1 : 0 : \cdots : 0)$, one readily sees that the stabilizer of a configuration whose support has only one point outside $H$ is $\Gm$, and adding any additional point outside this point and $H$ results in a finite stabilizer.
\end{proof}

We focus now only on full-dimensional orbits $Gp$.  By the K\"unneth formula, \[H_*((\PP^d)^n,\mathbb{Z}) \cong \bigotimes_{i=1}^n H_*(\PP^d,\mathbb{Z}),\] so a basis for the relevant homology group is the collection of tensor products \[[\PP^{m_1}]\otimes\cdots\otimes[\PP^{m_n}],~\sum_{i=1}^n m_i = d+1,\] where each $\PP^{m_i} \subseteq \PP^d$ is a linear subspace.  

\begin{proposition}\label{prop:homclass}
The coefficient of $[\overline{Gp}]$ on the basis element $[\PP^{m_1}]\otimes\cdots\otimes[\PP^{m_n}]$ is either 0 or 1, and it is 1 if and only the following holds: for general linear subspaces $L_i \subseteq \PP^d$ of codimension $m_i$, there is a unique $g\in G$ such that $g\cdot p_i \in L_i$ for $1 \le i \le n$.
\end{proposition}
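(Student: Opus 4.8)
The plan is to compute the coefficient as an intersection number against the Poincar\'e-dual K\"unneth basis, reinterpret that number as a count of group elements, and finally show this count is always $0$ or $1$ by linearizing the incidence conditions. First I would pin down the dual basis. On the smooth variety $(\PP^d)^n$ the intersection pairing is computed factorwise via K\"unneth, and in a single $\PP^d$ one has $[\PP^a]\cdot[\PP^b]=1$ when $a+b=d$ and $0$ otherwise; hence $[\PP^{m_1}]\otimes\cdots\otimes[\PP^{m_n}]$ pairs to $1$ with $[\PP^{d-m_1}]\otimes\cdots\otimes[\PP^{d-m_n}]$ and to $0$ with every other basis element of complementary dimension. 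Thus the coefficient in question equals the intersection number
\[
c \;=\; [\overline{Gp}]\cdot\bigl([L_1]\otimes\cdots\otimes[L_n]\bigr),
\]
where each $L_i\subseteq\PP^d$ is a general linear subspace of codimension $m_i$; since $\sum_i m_i=d+1=\dim\overline{Gp}$, the two classes have complementary dimension and $c$ is a well-defined integer.

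Next I would evaluate $c$ geometrically. Because $\prod_i\PGL_{d+1}$ acts transitively on tuples of linear subspaces of fixed dimensions, Kleiman's transversality theorem (valid over $\mathbb{C}$) ensures that for general $L_i$ the scheme $\overline{Gp}\cap(L_1\times\cdots\times L_n)$ is transverse, hence a reduced finite set each of whose points contributes $1$ to $c$. Moreover the boundary $\overline{Gp}\smallsetminus Gp$ has dimension at most $d<d+1=\operatorname{codim}(L_1\times\cdots\times L_n)$, so a general product $\prod_i L_i$ avoids it entirely. Every intersection point therefore lies in the free orbit $Gp$ and is of the form $g\cdot p$ for a unique $g\in G$ with $g\cdot p_i\in L_i$ for all $i$, so that $c=\#\{\,g\in G : g\cdot p_i\in L_i\ \text{for all }i\,\}$, exactly the quantity appearing in the statement.

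The crux is to show this count is $0$ or $1$, and here the decisive observation is that the incidence conditions are \emph{linear} in suitable coordinates. On the affine chart $\mathbb{A}^d=\PP^d\smallsetminus H$ the group $G$ acts as the homothety--translation group $a\mapsto\lambda a+u$, with the pair $(\lambda,u)\in\mathbb{C}^\ast\times\mathbb{C}^d$ determining the action of $g$ and hence the orbit point. For $p_i\notin H$ with affine coordinate $a_i$, the requirement $g\cdot p_i\in L_i$ becomes $\lambda a_i+u\in L_i\cap\mathbb{A}^d$, a general affine subspace of codimension $m_i$, i.e.\ $m_i$ affine-linear equations in $(\lambda,u)$; for $p_i\in H$ the point is fixed by $G$, so the condition is just $p_i\in L_i$, which for general $L_i$ forces $m_i=0$. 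Granting those constraints, the incidences amount to $\sum_{p_i\notin H}m_i=d+1$ linear equations in the $d+1$ unknowns $(\lambda,u)$. For general $L_i$ this square system is nonsingular and has a unique solution in $\mathbb{C}^{d+1}$; the solution lies in $G$ precisely when $\lambda\neq0$, while $\lambda=0$ is impossible generically, as it would force the general subspaces $L_i\cap\mathbb{A}^d$ to share a common point although their expected intersection dimension is $d-(d+1)=-1$. Hence $c$ is $0$ or $1$, and it is $1$ exactly when the unique solution lies in $G$, i.e.\ when there is a (necessarily unique) $g\in G$ with $g\cdot p_i\in L_i$ for all $i$ — the criterion in the statement. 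I expect the main obstacle to be not any single estimate but the genericity bookkeeping that makes everything hold at once: the $L_i$ must be general enough that Kleiman transversality applies, that $\prod_iL_i$ misses the lower-dimensional boundary, that each slice $L_i\cap\mathbb{A}^d$ is a general affine subspace, and that the square linear system is nonsingular with $\lambda\neq0$; each is an open dense condition, so their common refinement is again open dense, and organizing this cleanly is the real work.
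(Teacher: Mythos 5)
Your route is the intended one: the paper's own proof simply defers to Kapranov's argument for $\PGL_{d+1}$ and says it transfers verbatim to $G$, and that argument is exactly what you spell out --- identify the coefficient with the intersection number against a general product $L_1\times\cdots\times L_n$ of complementary codimension, invoke Kleiman transversality over $\mathbb{C}$ to make the intersection reduced and finite, note that the boundary $\overline{Gp}\setminus Gp$ has dimension at most $d<d+1$ and is therefore missed, and identify the surviving intersection points with group elements via the freeness of the orbit. All of that is correct, as is the reduction to an affine-linear system in $(\lambda,u)$ and the observation that any $p_i\in H$ with $m_i>0$ forces the count to vanish.

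The last step, however, asserts something false. The square system of $d+1$ affine-linear equations in $(\lambda,u)$ is \emph{not} nonsingular for general $L_i$ when the points $p_i$ sit in special position, and these are precisely the configurations the proposition must cover, since it is applied to the component configurations $\pi_v(\underline{X})$ of boundary points, whose entries are heavily repeated. Take $d=1$, $p_1=p_2\neq p_3$ all off $H$, and $(m_1,m_2,m_3)=(1,1,0)$: the two equations $\lambda a_1+u=x_1$ and $\lambda a_1+u=x_2$ have identical left-hand sides, so the coefficient matrix is singular for \emph{every} choice of $L_1,L_2$, the system is inconsistent for general $x_1\neq x_2$, and the coefficient is $0$ (here $\overline{Gp}=\{(y,y,z)\}$ and its class has no $[\PP^1]\otimes[\PP^1]\otimes[\PP^0]$ term). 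Your stated chain --- nonsingular, hence a unique solution $(\lambda,u)$, and $\lambda=0$ generically impossible --- would instead force the coefficient to equal $1$ whenever no weight sits on $H$, contradicting this example and wrecking the bookkeeping needed later in Proposition~\ref{prop:maxdegenclass}, where for each basis element exactly one component configuration is supposed to contribute. The repair is short and uses only ingredients you already have: the solution set $A$ of the affine-linear system is an affine subspace, $A\cap\{\lambda\neq 0\}$ is Zariski-open in the irreducible set $A$ and hence either empty or dense, and the Kleiman step has already shown this set is finite; therefore it has $0$ or $1$ elements. Replace the nonsingularity claim with this linearity-plus-finiteness argument and the proof closes.
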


\begin{proof}
Kapranov proves the analogous result for $\PGL_{d+1}$ acting on $(\PP^d)^n$ en route to proving \cite[Proposition 2.1.7]{Kapranov-chow}.  His argument works verbatim in our setting simply by changing all instances of $\PGL_{d+1}$ to $G$ and all instances of $(d+1)^2-1$ to $d+1$.  Here is an outline.  

The coefficient of the cycle $[\overline{Gp}]$ on $[\PP^{m_1}]\otimes\cdots\otimes[\PP^{m_n}]$ is by definition the multi-degree of this cycle determined by the indices $m_i$, which means it is the intersection number of the subvariety $\overline{Gp} \subseteq (\PP^d)^n$ with the product of generic linear subspaces $L_i \subseteq \PP^d$ of codimension $m_i$.  This intersection number is finite, since $\dim G = \sum_{i=1}^n m_i$, and it coincides with the cardinality of the set \[\{g\in G~|~g\cdot p \in L_1 \times \cdots \times L_n\}.\]  Let $\widehat{L_i} \subseteq \mathbb{A}^{d+1}$ denote the affine cone over $L_i\subseteq \PP^d$ and consider the action of the algebra $M_{d+1}$ of all square size $d+1$ matrices on $\mathbb{A}^{d+1}$.  For $g\in M_{d+1}$, the condition $g\cdot \widehat{p}_i \in \widehat{L}_i$, where $\widehat{p} = (\widehat{p}_1,\ldots,\widehat{p_n}) \in (\mathbb{A}^{d+1})^n$ is any lift of $p \in (\PP^d)^n$, determines a codimension $m_i$ linear subspace of the $(d+1)^2$-dimensional affine space $M_{d+1}$.  By the genericity assumption, the condition $g\cdot \widehat{p}\in \widehat{L}_1 \times \cdots \times \widehat{L}_n$ then determines a codimension $\sum_{i=1}^n m_i = d+1$ linear subspace of this space of matrices.  On the other hand, $G\subset M_{d+1}$ is the intersection of a $(d+2)$-dimensional linear subspace of $M_{d+1}$ and the non-linear subvariety $\det = 1$ (see Proposition \ref{prop:matrix}).  The intersection of this latter $(d+2)$-dimensional linear subspace and the former codimension $d+1$ linear subspace is a 1-dimensional subspace of $M_{d+1}$.  There are now two cases: either this line meets the hypersurface $\det = 1$, in which case our sought-after coefficient is 1 and there is a unique $g\in G$ sending $p$ into $L_1\times \cdots \times L_n$, or else this line is contained in the hypersurface $\det = 0$, in which case our coefficient is 0 and there is no matrix in $G$ sending $p$ into $L_1\times \cdots \times L_n$.
\end{proof}

\begin{remark}
In principle one could use this result to directly compute the coefficients of $\delta$, and of any other orbit closure of interest, but we will see later that we actually only need to do this in a special case where it is easier to apply this proposition.  It will follow (Corollary \ref{cor:all1s}) that $\delta$ has coefficients all equal to 1, but we do not need this fact yet.
\end{remark}

\subsection{Component configurations}\label{sec:compconf}

We next discuss a method of associating to each point of the boundary of $T_{d,n}$ a union of special orbit closures whose fundamental cycle (i.e., the sum of the corresponding orbit closure cycles) will turn out to have the same class $\delta$ as the generic orbit closure.  

Fix a closed point 
\[\underline{X} = (H_{v_0} \hookrightarrow X = \cup_{v\in V}X_v,q_1,\ldots,q_n) \in T_{d,n}\] and recall (see \S\ref{cpts}) that each irreducible component $X_v \cong \Bl_{k_v}\PP^d$ is equipped with a parameterized hyperplane $\PP^{d-1} \cong H_v \hookrightarrow X_v$, and that we denote by $\overline{X_v} \cong \PP^d$ the blow-down corresponding to the specified isomorphism and refer to the images of the $k_v \in \mathbb{Z}_{\ge 0}$ exceptional divisors as the blown-up points in $\overline{X_v}$.  Recall also the notion of daughter vertex from Definition \ref{family}.

\begin{definition}
Let $v\in V$ and let $p_1,\ldots,p_{k_v} \in \overline{X_v}$ be the blown-up points with corresponding exceptional divisors $E_1,\ldots,E_{k_v} \subseteq X_v$; then $v$ correspondingly has daughters $v_1,\ldots,v_{k_v}$.  If $v < w$ then we shall call the unique daughter $v_i > v$ satisfying $v_i \le w$ the daughter of $v$ \emph{determined} by $w$.  We also refer to $E_i$ as the exceptional divisor determined by $w$ and $p_i$ as the point determined by $w$.
\end{definition}

Now fix a vertex $v\in V$.  We will construct a divisor $D_v$ on $X$.  If $d=1$ then we simply take $D_v$ to be a smooth point on the $v$-component of $X$; otherwise we proceed as follows.  First choose (the strict transform of) a generic hyperplane in the root component $X_{v_0}$ passing through the point determined by $v$.  The intersection of this hyperplane with the corresponding exceptional divisor is identified in the parent component $X_{v'}$ with a hyperplane inside the hyperplane $H_{v'}$.  Consider the (strict transform of the) hyperplane spanned by this codimension two linear subspace and the point of $X_{v'}$ determined by $v$.  Repeating this process all the way from $v_0$ to $v$ yields a divisor $D_v \subseteq X$ supported in the chain of components from the root component to the component $X_v$.  Note that $D_v$ is a Cartier divisor, and that different choices of initial hyperplane in the root component yield linearly equivalent divisors $D_v$; consequently, the class of the divisor $D_v$ is well-defined by this construction and completely determined by $v$. 
\begin{proposition}\label{prop:configLB}
For any $v\in V$, consider the line bundle $L_v := \mathcal{O}_X(D_v)$.  The following hold:
\begin{enumerate}
\item $L_v$ is basepoint free;
\item $h^0(X,L_v) = d+1$;
\item $h^i(X, L_v) = 0$ for all $i \ge 1$;
\item the restriction of this line bundle satisfies \[L_v|_{X_w} \cong \begin{cases} \mathcal{O}_{X_w}(H_w) &\mbox{if } w = v \\ \mathcal{O}_{X_w} & \mbox{if } w > v\text{ or if }w\text{ is incomparable with }v \\ \mathcal{O}_{X_w}(H_w - E) &\mbox{if } w < v, \text{where }E \subseteq X_w\text{ is the exceptional divisor determined by }v; \end{cases}\] 
\item sections on $X_v$ extend uniquely to $X$: $\Gamma(X_v,L_v|_{X_v}) = \Gamma(X,L_v)$; moreover, the restriction of these sections to any other component $X_w$ yields a complete linear system for $L_v|_{X_w}$.
\end{enumerate}
\end{proposition}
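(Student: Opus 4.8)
The plan is to reduce all five assertions to cohomology computations on the individual components $X_w$ and their pairwise intersections, exploiting that the dual complex of $X$ is a tree and hence that $X$ has only double intersections. First I would establish (4), which translates the construction of $D_v$ into intersection theory on each $\Bl_{k_w}\PP^d$. On the component $X_v$ itself the divisor $D_v$ restricts to the parameterized hyperplane $H_v$; on each ancestor $X_w$ (with $w<v$) it restricts to the strict transform of a generic hyperplane through the blown-up point determined by $v$ and missing the other blown-up points, whose class is $H_w-E$ with $E$ the exceptional divisor determined by $v$; and because $D_v$ is supported on the chain of components from the root to $v$, and because $H_v$ (respectively $H_w-E$) is disjoint from the loci where components branch off this chain---here one uses the stability hypothesis that a parameterized hyperplane misses the special points---the divisor restricts trivially on every descendant or incomparable component. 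This is exactly (4).

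With (4) in hand I would feed $L_v$ into the Mayer--Vietoris resolution attached to the closed cover $\{X_w\}$,
\[ 0 \to L_v \to \bigoplus_{w\in V} L_v|_{X_w} \xrightarrow{\ \phi\ } \bigoplus_{\{w,w'\}\ \mathrm{edge}} L_v|_{X_w\cap X_{w'}} \to 0, \]
which is exact because the dual complex is a tree, so that the only intersections are the double intersections $X_w\cap X_{w'}\cong\PP^{d-1}$, indexed by edges. The point of (4) is that every term is computable: on each component $L_v|_{X_w}$ is one of $\mathcal{O}(H_w)$, $\mathcal{O}(H_w-E)$, or $\mathcal{O}$, with $h^0$ equal to $d+1$, $d$, or $1$ and no higher cohomology---for $\mathcal{O}(H_w-E)$ one uses that it is the pullback of $\mathcal{O}_{\PP^{d-1}}(1)$ along projection from the blown-up point---and each edge bundle is $\mathcal{O}_{\PP^{d-1}}(1)$ on the edges of the chain and $\mathcal{O}_{\PP^{d-1}}$ otherwise, again with vanishing higher cohomology. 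Hence the long exact sequence collapses to $H^0(X,L_v)=\ker\phi_0$, $H^1(X,L_v)=\operatorname{coker}\phi_0$, and $H^i(X,L_v)=0$ for $i\ge 2$, where $\phi_0$ is the induced map on global sections. A direct count, using these $h^0$ values, the fact that a tree on $|V|$ vertices has $|V|-1$ edges, and that the number of chain edges equals the number of ancestors of $v$, shows $\dim\bigoplus_w H^0 - \dim\bigoplus_e H^0 = d+1$. Thus (2) and (3) both follow once $\phi_0$ is surjective.

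To prove surjectivity of $\phi_0$ I would orient each edge from parent to daughter and solve $\phi_0((\sigma_w))=(\tau_e)$ by integrating outward along the tree from the root: having fixed the parent section, on each daughter $w'$ I must produce a section with prescribed restriction to the gluing hyperplane $H_{w'}=X_w\cap X_{w'}$, which is exactly the surjectivity of $H^0(X_{w'},L_v|_{X_{w'}})\to H^0(H_{w'},L_v|_{H_{w'}})$. This holds in each case, since restricting linear forms to a hyperplane is surjective, and in the case $L_v|_{X_{w'}}=\mathcal{O}(H_{w'}-E)$ one uses that the blown-up point determined by $v$ misses $H_{w'}$ (stability) to see the restriction is even an isomorphism. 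For (5), running the same integration from an arbitrary component yields surjectivity of $H^0(X,L_v)\to H^0(X_w,L_v|_{X_w})$ once one also checks surjectivity of the restriction to the \emph{exceptional-divisor} side of each edge---which again reduces to evaluating a linear form, vanishing at one blown-up point, at a distinct point---and this is the ``complete linear system'' statement. The unique-extension claim $\Gamma(X_v,L_v|_{X_v})=\Gamma(X,L_v)$ then follows by running the integration from $X_v$: a global section vanishing on $X_v$ must vanish on every component because each edge-restriction encountered is in fact \emph{injective}, and since $h^0(X,L_v)=d+1=h^0(X_v,L_v|_{X_v})$ the restriction to $X_v$ is an isomorphism. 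Finally (1) is immediate from (5): each $L_v|_{X_w}$ is basepoint free, and surjectivity of $H^0(X,L_v)\to H^0(X_w,L_v|_{X_w})$ lifts a local non-vanishing section to a global one.

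The main obstacle is the organized case analysis of the edge-restriction maps that powers the tree-integration: one must verify, uniformly in the position of a component relative to $v$ and in whether a given edge meets that component along its parameterized hyperplane or along an exceptional divisor, that the relevant restriction is surjective (for existence of global sections and for the complete-linear-system claim) or injective (for uniqueness of the extension). Each individual check is elementary linear algebra on $\Bl_{k}\PP^d$, reducing to restricting or evaluating linear forms with stability guaranteeing that the special points avoid the parameterized hyperplanes, but assembling them so that the induction along the tree closes up is where the care lies.
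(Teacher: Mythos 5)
Your proposal is correct, and the individual computations it rests on (the three possible restrictions $\mathcal{O}_{X_w}(H_w)$, $\mathcal{O}_{X_w}(H_w-E)$, $\mathcal{O}_{X_w}$, their cohomology via blow-downs and the projection from the blown-up point, and the surjectivity/injectivity of the edge-restriction maps guaranteed by stability) are exactly the facts the paper also uses. The logical organization is genuinely different, though, most visibly for item (3). The paper never writes down the full normalization sequence over all components at once: it twists $0\to\mathcal{O}_X(-D_v)\to\mathcal{O}_X\to\mathcal{O}_{D_v}\to 0$ by $L_v$, handles $\mathcal{O}_{D_v}(D_v)$ by induction on the dimension $d$ (the divisor $D_v$ being itself a chain of $\PP^{d-1}$s of the same shape), and then proves $H^i(\mathcal{O}_X)=0$ by a second induction on $|V|$, peeling off a leaf component using the two-component sequence $0\to\mathcal{O}_X\to\mathcal{O}_{X_w}\oplus\mathcal{O}_{\overline{X\setminus X_w}}\to\mathcal{O}_{X_w\cap\overline{X\setminus X_w}}\to 0$; items (2) and (1) are then deduced from (4) and (5), and (5) is proved by an explicit geometric construction (extending a hyperplane across an exceptional divisor as the union of the lines through the blown-up point). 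Your single Mayer--Vietoris resolution indexed by vertices and edges, combined with the Euler-characteristic count $\sum_w h^0-\sum_e h^0=d+1$ and the outward tree-integration, delivers (2), (3) and (5) in one package and replaces both inductions; its cost is the uniform case analysis of edge restrictions that you correctly flag, and the need to justify exactness of the three-term sequence (which does hold here, since the SNC condition and the disjointness of the exceptional divisors and of $H_w$ from the special points rule out triple intersections, so the sequence is locally the standard two-component one). Your tree-integration argument for (5) is the cohomological shadow of the paper's geometric extension of hyperplanes; the two are equivalent. One small caveat: make sure the integration for existence in (5) is run so that each new component is attached along exactly one edge to the already-determined region (automatic since the dual graph is a tree), as you implicitly do.
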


Before proving this, let us state the main geometric consequence.

\begin{corollary}\label{cor:configmap}
The morphism $\varphi_{L_v} : X \rightarrow \PP^d$ induced by $|L_v|$ blows down $X_v$ to $\PP^d$; it contracts each $X_w$ with $w > v$ to the point determined by $w$.  The effect on the remaining components is described inductively: if $w$ is incomparable with $v$ then contract $X_w$ to the point on the parent component determined by $w$; if $w < v$ then project $X_w$ onto the exceptional divisor determined by $v$.
\end{corollary}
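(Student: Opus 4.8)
The plan is to read off the map $\varphi_{L_v}$ one component at a time, using the restriction formula of Proposition~\ref{prop:configLB}(4) together with the completeness statement in part (5). Since $L_v$ is basepoint free with $h^0(X,L_v)=d+1$, the complete linear system $|L_v|$ defines a morphism $\varphi_{L_v}\colon X\to\PP^d$. Part (5) guarantees that for each component $X_w$ the restriction map $\Gamma(X,L_v)\to\Gamma(X_w,L_v|_{X_w})$ is surjective, so $\varphi_{L_v}|_{X_w}$ is, up to the linear embedding of its image into $\PP^d$, exactly the map associated to the \emph{complete} linear system of $L_v|_{X_w}$. It therefore suffices to identify this map for each of the four types of component appearing in part (4).

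The two substantive cases are $w=v$ and $w<v$. When $w=v$ we have $L_v|_{X_v}\cong\mathcal{O}_{X_v}(H_v)$, and because $H_v$ is disjoint from the blown-up points (condition (5) of \S\ref{cpts}) its strict transform equals the total transform $\pi^*\mathcal{O}_{\PP^d}(1)$, where $\pi\colon X_v\cong\Bl_{k_v}\PP^d\to\overline{X_v}=\PP^d$ is the blow-down. The complete linear system is then pulled back from $|\mathcal{O}_{\PP^d}(1)|$, which embeds $\PP^d$ isomorphically, so $\varphi_{L_v}|_{X_v}$ is precisely the blow-down. When $w<v$ we have $L_v|_{X_w}\cong\mathcal{O}_{X_w}(H_w-E)$ with $E$ the exceptional divisor determined by $v$; pushing forward, its complete linear system is the pullback of the system of hyperplanes in $\overline{X_w}=\PP^d$ through the blown-up point $p_E$ beneath $E$. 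The associated map is the blow-down followed by projection from $p_E$, whose image $\PP^{d-1}$ is canonically identified with the exceptional divisor $E\cong\PP^{d-1}$. Hence $\varphi_{L_v}|_{X_w}$ is the projection onto the exceptional divisor determined by $v$, as claimed.

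For the remaining components $L_v|_{X_w}\cong\mathcal{O}_{X_w}$ is trivial, so $\varphi_{L_v}$ contracts $X_w$ to a single point and the only work is to name that point. Here I would use that $\varphi_{L_v}$ is a morphism, so the image of $X_w$ must coincide with the image of the divisor $H_w\subseteq X_w$ along which $X_w$ is glued to its parent, and $H_w$ is identified in the parent with the exceptional divisor over the point determined by $w$. For a descendant $w>v$ this reduces, by connectedness of the subtree hanging below the relevant daughter $v_i$ of $v$ and the fact that $\varphi_{L_v}|_{X_v}$ is the blow-down sending $E_i\mapsto p_i$, to the assertion that $X_w$ maps to $p_i$, the point determined by $w$. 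For $w$ incomparable with $v$ the parent $X_{w'}$ satisfies either $w'<v$ or $w'$ incomparable with $v$ (never $w'\ge v$), and in each such case the map on $X_{w'}$ already established sends the exceptional divisor determined by $w$ to a single point; that point is by definition the point on the parent component determined by $w$, yielding the inductive description in the statement. The induction is well-founded: moving from $w$ toward the root, one eventually reaches the greatest common ancestor of $v$ and $w$, which is $<v$, where the projection description applies.

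I expect the main obstacle to be the $w<v$ case: one must correctly push $\mathcal{O}_{X_w}(H_w-E)$ down through the blow-up, recognize its complete linear system as the hyperplanes through $p_E$, and match the resulting projection target $\PP^{d-1}$ with the exceptional divisor $E$ sitting inside the single ambient $\PP^d$. The compatibility of all the component-wise images within one copy of $\PP^d$ is automatic, since they are restrictions of the single morphism $\varphi_{L_v}$; the real content is pinning down each image locus, and verifying that completeness (Proposition~\ref{prop:configLB}(5)) is precisely what licenses replacing the restricted global system by the complete system $|L_v|_{X_w}|$.
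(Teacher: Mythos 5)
Your argument is correct and follows exactly the route the paper intends: the paper simply declares Corollary~\ref{cor:configmap} to be an immediate consequence of Proposition~\ref{prop:configLB}, and your write-up is a careful unwinding of parts (4) and (5) of that proposition (completeness of the restricted systems, then a case-by-case identification of each $|L_v|_{X_w}|$) that the paper leaves implicit. No gaps.
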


\begin{proof}
The corollary follows immediately from the proposition, so let us prove the proposition.  First, note that (1) follows from (4) and (5).  Indeed, to check if $L_v$ has any basepoints it suffices to check this on each component $X_w$, which by (5) is equivalent to checking whether the complete linear system of each restricted line bundle $L_v|_{X_w}$ has any basepoints, and clearly the three line bundles described in (4) have none.  Similarly, (2) follows from (4) and (5) since \[h^0(X,L_v) = h^0(X_v,L_v|_{X_v}) = h^0(X_v, \mathcal{O}_{X_v}(H_v)) = h^0(\PP^d,\mathcal{O}(1)) = d+1.\] Item (4) follows immediately from the construction of the Cartier divisor $D_v$, so it suffices to prove (3) and (5), which we turn to in order.

Consider the short exact sequence \[0 \rightarrow \mathcal{O}_X(-D_v) \rightarrow \mathcal{O}_X \rightarrow \mathcal{O}_{D_v} \rightarrow 0.\]  After tensoring with $L_v = \mathcal{O}_X(D_v)$, we get the long exact cohomology sequence \[\cdots \rightarrow H^{i-1}(\mathcal{O}_{D_v}(D_v)) \rightarrow H^i(\mathcal{O}_X) \rightarrow H^i(L_v) \rightarrow H^i(\mathcal{O}_{D_v}(D_v)) \rightarrow \cdots.\]  The key observation is that $D_v$ is a tree (in fact, a chain, though we won't need this) of $\PP^{d-1}$s and $\mathcal{O}_{D_v}(D_v)$ is the line bundle associated to a Cartier divisor on $D_v$ constructed from a chain of hyperplanes as above.  Therefore, we can assume by induction on the dimension $d$ that $\mathcal{O}_{D_v}(D_v)$ has vanishing higher cohomology.  Thus $H^i(\mathcal{O}_X) \cong H^i(L_v)$ for $i \ge 2$ and $H^1(\mathcal{O}_X) \twoheadrightarrow H^1(L_v)$.  Consequently, to show that $L_v$ has vanishing higher cohomology it suffices to show this for $\mathcal{O}_X$ instead.  

For this purpose, fix $i \ge 1$ and note that for each $w\in V$ we have $H^i(X_w,\mathcal{O}_{X_w}) \cong H^i(\PP^d,\mathcal{O}_{\PP^d}) = 0$, since $X_w$ is a blow-up of $\PP^d$ at a finite number of points (see, e.g., the proof of \cite[Proposition V.3.4]{Hartshorne}).  We can assume, therefore, that $|V| > 1$.  By \cite[Lemma 6.2]{Giansiracusa-Gillam}, there is a short exact sequence \[0 \rightarrow \mathcal{O}_X \rightarrow \mathcal{O}_{X_w}\oplus\mathcal{O}_{\overline{X\setminus X_w}} \rightarrow \mathcal{O}_{X_w\cap \overline{X\setminus X_w}} \rightarrow 0\]  Since $X_w\cap\overline{X\setminus X_w} \cong \PP^{d-1}$ when $w\in V$ is a leaf of the rooted tree, the associated long exact sequence shows that $H^i(\mathcal{O}_X) \cong H^i(\mathcal{O}_{\overline{X\setminus X_w}})$ for all $i \ge 2$ and leaves $w$.  Thus, by induction on $|V|$ and taking $w$ to be a leaf, we deduce that $H^i(\mathcal{O}_X)=0$ for all $i\ge 2$.  Finally, to see that $H^1(\mathcal{O}_X) =0$ we investigate the following portion of this long exact sequence, again with $w$ a leaf: \[H^0(\mathcal{O}_X)\oplus H^0(\mathcal{O}_{\overline{X\setminus X_w}}) \rightarrow H^0(\mathcal{O}_{\PP^{d-1}}) \rightarrow H^1(\mathcal{O}_X) \rightarrow H^1(\mathcal{O}_{\overline{X\setminus X_w}}).\]  The first arrow is surjective, so the middle arrow is the zero map, but then induction on $|V|$ again allows us to assume that $H^1(\mathcal{O}_{\overline{X\setminus X_w}}) = 0$ and hence that this middle arrow is surjective as well.  This proves (3).

For (5), we first note that any hyperplane in $X_v$ avoiding the exceptional divisors meets the hyperplane $H_v \subseteq X_v$ in a codimension two linear subspace $W \subseteq H_v$, and this in turn viewed as a linear subspace $W \subseteq E \subseteq X_{v'}$ of the corresponding exceptional divisor in the parent component extends uniquely to a hyperplane in $X_{v'}$.  Indeed, each point $q \in E$ corresponds to a line through the point lying under $E$, and the hyperplane in $X_{v'}$ is the union of these lines over all $q\in W \subseteq E$.  Proceeding inductively down the chain of components from $X_v$ to the root component $X_{v_0}$, we see that every section in $\Gamma(X_v,L_v|_{X_v})$ extends uniquely to section in $\Gamma(X,L_v)$.  By varying the initial hyperplane in $X_{v}$ in its linear equivalence class we get every codimension two linear subspace contained in $H_v$, and consequently when extending these to hyperplanes in the parent component $X_{v'}$ we get every hyperplane meeting this exceptional divisor, i.e., we obtain the complete linear system $|\mathcal{O}_{X_{v'}}(H_{v'} - E)|$.  Repeating inductively verifies the final claim of (5) for the case $w < v$, and for the remaining cases we need simply observe that any constant function on any other component can be obtained, which is clear. 
\end{proof}

Recall that the data of a stable tree of $\PP^d$s includes a parameterization of each hyperplane $H_v$, i.e., for each $v\in V$ we have a specified map $\PP^{d-1} \hookrightarrow X$ with image $H_v \subseteq X_v$.  We can choose a basis for $|L_v|$ such that the morphism $\varphi_{L_v} : X \rightarrow \PP^d$ sends the $d$ coordinate points and the point $(1:\cdots :1)$ of $H_v \subseteq X_v$ to the corresponding points of the hyperplane $H \subseteq \PP^d$ defined by the vanishing of the first coordinate.  This yields a commutative triangle
\[\xymatrix{X \ar[rr]^{\varphi_{L_v}} & & \PP^d \\ & \PP^{d-1} \ar[ul] \ar[ur] & }\]
which we may view colloquially as the statement that $\varphi_{L_v}$ ``fixes the hyperplane $H_v$ pointwise.''  \emph{In what follows, we shall only work with bases for this complete linear system with this property.}

\begin{definition}\label{def:compconfig}
Fix a closed point \[\underline{X} = (H_{v_0} \hookrightarrow X = \cup_{v\in V}X_v,q_1,\ldots,q_n) \in T_{d,n}.\]  For each vertex $v\in V$, the \emph{$v$-component configuration} is the point configuration \[\pi_v(\underline{X}) := (\varphi_{L_v}(q_1), \ldots, \varphi_{L_v}(q_n)) \in (\PP^d)^n.\]  The \emph{configuration cycle} $Z(\underline{X})$ is the sum of fundamental cycles of $G$-orbit closures of component configurations: \[Z(\underline{X}) := \sum_{v\in V}[\overline{G \pi_v(\underline{X})}] \in \Chow((\PP^d)^n).\]
\end{definition}

\begin{remark}
It follows from Corollary \ref{cor:configmap} and Proposition \ref{prop:fulldim} that all orbits in this sum have full dimension $d+1$.  Moreover, by our assumption above that $\varphi_{L_v}$ fixes $H_v$ pointwise, it follows that each component configuration $\pi_v(\underline{X})$ is well-defined up to the diagonal $G$-action, and hence that the configuration cycle $Z(\underline{X})$ depends only on $\underline{X}\in T_{d,n}$.  
\end{remark}

\begin{figure}[h!]
\includegraphics[scale=0.5]{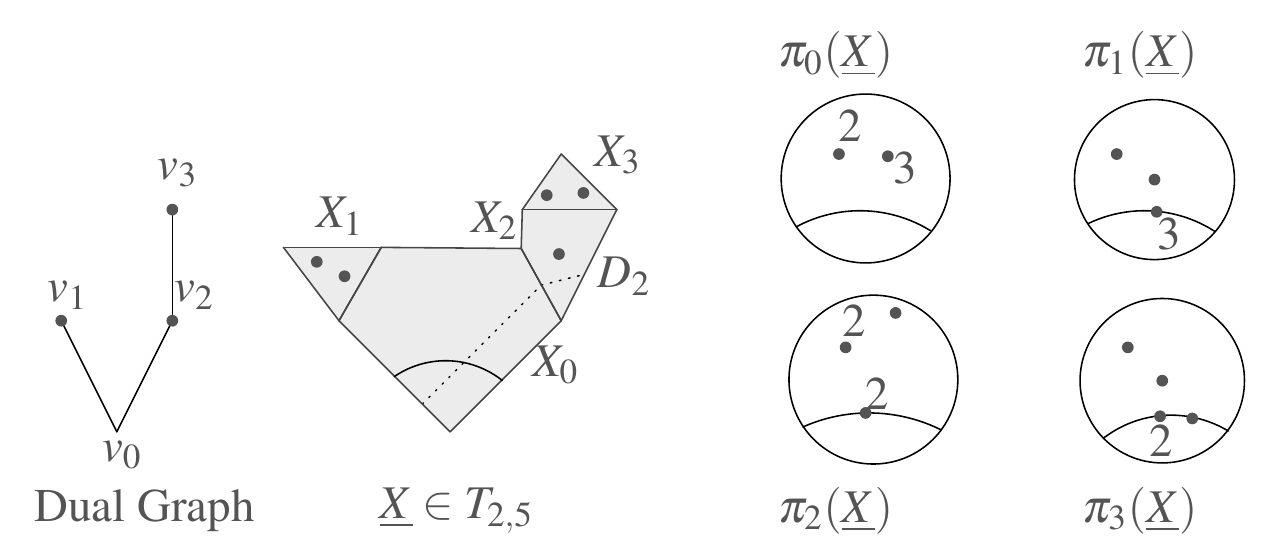}
\caption{Component configurations of a maximal degenerated stable tree in $T_{2,5}$}
\end{figure}


\begin{example}
We illustrate in Figure 3 the component configurations $\pi_v(\underline{X})$ associated to a closed point in $T_{2,5}$.  The numbers indicate the number of points supported at a given location, if it is greater than one.  The dotted line depicts the Cartier divisor $D_2$ defined in the paragraph preceding Proposition \ref{prop:configLB} that is used to construct $\pi_2(\underline{X})$.
\end{example}

To understand degenerations and limits of 1-parameter families in $T_{d,n}$, it will be useful to study the following situation.

\begin{definition}
A closed point $\underline{X}\in T_{d,n}$ is \emph{maximally degenerate} if it lies on a minimal (i.e., deepest) stratum of the boundary stratification, or equivalently, if each blown-down component $\overline{X_v}$ has exactly two special points.  
\end{definition}

A word of caution: unlike the case of $\M_{0,n}$, minimal boundary strata of $T_{d,n}$ have positive dimension when $d > 1$, so maximally degenerate points have moduli here.

\begin{lemma}\label{lemma:maxdegen}
If $\underline{X}\in T_{d,n}$ is maximally degenerate, then for any $v\in V$ the component configuration $\pi_v(\underline{X}) \in (\PP^d)^n$ is supported on $H\cup\{\varphi_{L_v}(p_1),\varphi_{L_v}(p_2)\} \subseteq \PP^d$, where $p_1,p_2\in \overline{X_v}$ are the two special points of this component.
\end{lemma}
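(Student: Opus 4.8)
We must show that for a maximally degenerate $\underline{X}$, each component configuration $\pi_v(\underline{X})$ is supported on $H \cup \{\varphi_{L_v}(p_1), \varphi_{L_v}(p_2)\}$, where $p_1, p_2$ are the two special points of the blown-down component $\overline{X_v}$. So every marked point $q_i$, after applying the contraction $\varphi_{L_v}$, lands either on the hyperplane $H \subseteq \PP^d$ or on one of the images of the two special points. The point is that a maximally degenerate tree offers no room for the marked points to spread out: each non-root component has exactly one marked-point-bearing descendant direction and exactly one hyperplane, so everything collapses.

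**The plan.** My approach is to apply Corollary \ref{cor:configmap}, which describes $\varphi_{L_v}$ componentwise, and to track where each marked point $q_i$ goes. A marked point $q_i$ lives on some component $X_w$. I would split into the three cases from the corollary according to how $w$ compares to $v$ in the tree order.

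First I would treat $w > v$ and $w$ incomparable with $v$. By Corollary \ref{cor:configmap}, when $w > v$ the component $X_w$ is contracted to the point of $\overline{X_v}$ determined by $w$; and when $w$ is incomparable with $v$, the corollary describes an inductive contraction onto the parent, which (tracing back down toward $v$) ultimately lands $q_i$ on one of the special points sitting on $\overline{X_v}$. The key combinatorial input is maximal degeneracy: each $\overline{X_v}$ has exactly two special points, call them $p_1, p_2$. Since a special point is either a blown-up point (hence the image of a descendant direction) or the image of a marked point, and maximal degeneracy forces exactly two of these, every $q_i$ sitting on a component $w \ge v$ or incomparable with $v$ must be carried by $\varphi_{L_v}$ to one of $\varphi_{L_v}(p_1), \varphi_{L_v}(p_2)$. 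I would argue this by downward induction on the tree, using that at each step the contraction of a component lands on a single special point of its parent, and iterating.

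Second, and this is where I expect the only real subtlety, I would handle the case $w < v$ (or $w = v$). Here Corollary \ref{cor:configmap} says $\varphi_{L_v}$ projects $X_w$ onto the exceptional divisor determined by $v$, which is identified with $H_{w'}$ of the appropriate daughter and ultimately maps into the hyperplane $H \subseteq \PP^d$ — recall from Proposition \ref{prop:configLB}(4) that $L_v|_{X_w} \cong \mathcal{O}_{X_w}(H_w - E)$ for $w < v$, so the image is the hyperplane. Thus any marked point on a strict ancestor of $X_v$ maps into $H$. For $w = v$ itself, $\varphi_{L_v}$ is the blow-down isomorphism $X_v \to \overline{X_v} = \PP^d$, and a marked point on $X_v$ maps to its own image $\varphi_{L_v}(q_i)$; but maximal degeneracy says $\overline{X_v}$ has exactly two special points, and the images of any marked points on $X_v$ are among these special points, so again we land on $\{\varphi_{L_v}(p_1), \varphi_{L_v}(p_2)\}$.

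**The main obstacle.** The genuinely delicate step is the incomparable case combined with the bookkeeping that the two special points $p_1, p_2$ of $\overline{X_v}$ account for \emph{all} the marked points not mapping into $H$. I would make this precise by observing that the tree rooted at $v$ partitions the marked points: those on components $\ge v$ or incomparable-but-downstream-of-$v$'s-subtree-direction collapse to one of the $k_v$ blown-up points of $\overline{X_v}$ (and maximal degeneracy gives $k_v \le 2$ special points total, forcing the count), while those on ancestors map into $H$. Verifying that the contraction of an entire incomparable subtree lands on a single special point — rather than spreading out — is the heart of the argument, and it follows by iterating the inductive description in Corollary \ref{cor:configmap}: each contraction sends a subtree to the single point determined by its topmost vertex on the parent component. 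Once this collapsing is established, the stated support description is immediate.
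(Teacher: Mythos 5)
Your overall strategy---unpack Corollary \ref{cor:configmap} and track each marked point according to whether its component $X_w$ satisfies $w=v$, $w>v$, $w<v$, or is incomparable with $v$---is exactly the route the paper takes (its proof is a one-line appeal to that corollary), and your treatment of the cases $w=v$, $w>v$, and $w<v$ is correct. However, your handling of the incomparable case is wrong. You claim that a component $X_w$ incomparable with $v$ is ``ultimately'' carried to one of the two special points of $\overline{X_v}$. It is not: it is carried into $H$. Iterating the corollary, the subtree containing $w$ collapses onto a blown-up point of $\overline{X_u}$ for the strict ancestor $u<v$ at which that branch splits off from the chain leading to $v$, and this blown-up point is \emph{not} the center of the projection $\varphi_{L_v}|_{X_u}$ (that center is the point of $\overline{X_u}$ determined by $v$, which lies on a different branch). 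The projection therefore sends it to a well-defined point of the exceptional divisor $E\cong H_{u''}$ determined by $v$, and following the chain down to $X_v$ the image lands in $H_v$, whose image under the blow-down of $X_v$ is the hyperplane $H\subseteq\PP^d$ (which, by condition (5) of \S\ref{cpts}, is disjoint from the special points, so the image cannot be $\varphi_{L_v}(p_1)$ or $\varphi_{L_v}(p_2)$).

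This error happens to be harmless for the \emph{conclusion} of the lemma, since $H$ is also contained in the allowed support $H\cup\{\varphi_{L_v}(p_1),\varphi_{L_v}(p_2)\}$; every marked point still lands where the statement permits. But the misattribution is not cosmetic: the correct bookkeeping is that the two points off $H$ in $\pi_v(\underline{X})$ collect precisely the marked points on $X_v$ and on its descendants, while marked points on ancestors \emph{and on incomparable branches} all fall into $H$. This distinction is exactly what drives the weight count ($m_I$, $m_J$ versus $m_K$) in the proof of Proposition \ref{prop:maxdegenclass}, so you should correct the incomparable case before relying on this lemma there.
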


\begin{proof}
This follows immediately from Corollary \ref{cor:configmap}.
\end{proof}

\begin{proposition}\label{prop:maxdegenclass}
If $\underline{X}\in T_{d,n}$ is maximally degenerate, then the homology class of $Z(\underline{X})$ has all coefficients equal to 1.
\end{proposition}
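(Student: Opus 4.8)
The plan is to prove the statement coefficient-by-coefficient in the K\"unneth basis. Since each summand $[\overline{G\pi_v(\underline{X})}]$ of $Z(\underline{X})$ has coefficients lying in $\{0,1\}$ by Proposition \ref{prop:homclass}, it suffices to show that for every basis element $[\PP^{m_1}]\otimes\cdots\otimes[\PP^{m_n}]$ with $\sum_i m_i = d+1$ there is exactly one vertex $v\in V$ for which the coefficient of $[\overline{G\pi_v(\underline{X})}]$ equals $1$. I would organize the argument by first reducing the coefficient of a single component configuration to combinatorial data attached to $v$, then carrying out an explicit two-point intersection count, and finally solving the resulting combinatorial problem on the rooted tree.

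For the first reduction, write $\pi_v(\underline{X}) = (r_1,\dots,r_n)$ with $r_i = \varphi_{L_v}(q_i)$, and let $w_i\in V$ denote the vertex whose component carries $q_i$. By Corollary \ref{cor:configmap} and Lemma \ref{lemma:maxdegen}, $r_i$ lies off $H$ precisely when $w_i\ge v$, in which case it equals one of the two special points $a_v\ne b_v$ of $\overline{X_v}$ (both off $H$); when $w_i\not\ge v$ we have $r_i\in H$. Since every element of $G$ fixes $H$ pointwise, in the incidence criterion of Proposition \ref{prop:homclass} the requirement $g\cdot r_i\in L_i$ with $r_i\in H$ and $L_i$ general of codimension $m_i$ can hold only if $m_i=0$. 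Thus the coefficient vanishes unless $m_i=0$ for all $i$ with $w_i\not\ge v$; call this condition (i). I would then partition the remaining indices into $A_v$ and $B_v$ according to whether $r_i=a_v$ or $r_i=b_v$, and set $\mu_a=\sum_{i\in A_v}m_i$ and $\mu_b=\sum_{i\in B_v}m_i$, so that $\mu_a+\mu_b=d+1$ under (i). The incidence conditions then collapse to $g\cdot a_v\in M_a$ and $g\cdot b_v\in M_b$, where $M_a=\bigcap_{i\in A_v}L_i$ and $M_b=\bigcap_{i\in B_v}L_i$ are general linear subspaces of codimensions $\mu_a$ and $\mu_b$.

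Next I would carry out the two-point count, which determines exactly when this coefficient is $1$. Working in the affine chart $\PP^d\setminus H\cong\mathbb{A}^d$ and using the explicit matrices of Proposition \ref{prop:matrix}, the orbit of $(a_v,b_v)$ is parameterized by pairs $(A,\,A+\lambda(b_v-a_v))$ with $A\in\mathbb{A}^d$ arbitrary and $\lambda\in\mathbb{C}^*$; hence $g\cdot a_v\in M_a$, $g\cdot b_v\in M_b$ becomes a square inhomogeneous linear system in $(A,\lambda)$ with $\mu_a+\mu_b=d+1$ equations, which for general $M_a,M_b$ has a unique solution, necessarily with $\lambda\ne 0$ since generically $M_a\cap M_b=\emptyset$. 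This single solution corresponds to a single point of the orbit closure $\overline{G\cdot(a_v,b_v)}\subseteq(\PP^d)^2$, so the relevant intersection number, and hence the coefficient, is $1$ exactly when both $\mu_a\ge 1$ and $\mu_b\ge 1$ (call this condition (ii)) and $0$ otherwise, i.e.\ when one of $M_a,M_b$ is empty; note $\mu_a,\mu_b\le d$ is then automatic.

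It then remains to prove the combinatorial claim: for each $(m_1,\dots,m_n)$ with $\sum_i m_i=d+1$, exactly one $v$ satisfies (i) and (ii). Let $T=\{i:m_i>0\}$; since every $m_i\le d$ while $\sum_i m_i=d+1$, we have $|T|\ge 2$. Conditions (i)--(ii) say precisely that $v$ is a common ancestor of all $w_i$ with $i\in T$ and that the two branches at $v$ separate $T$ into two nonempty parts, and I claim the unique such $v$ is the deepest common ancestor $v^{*}$ of $\{w_i:i\in T\}$. Existence is checked against the three shapes a maximally degenerate vertex can take (two daughter subtrees; one daughter subtree together with a marked point on the component; or two marked points on the component), with $|T|\ge 2$ ruling out the possibility that all of $T$ lies in one branch; for uniqueness, (i) forces $v\le v^{*}$, and if $v<v^{*}$ then $v^{*}$, hence every $w_i$ with $i\in T$, lies in a single daughter subtree of $v$, so $T$ meets only one branch of $v$ and (ii) fails. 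I expect the main obstacle to be the first reduction rather than this combinatorics: one must track via Corollary \ref{cor:configmap} exactly where $\varphi_{L_v}$ sends each marked point (to one of the two special points when $w_i\ge v$, and into the fixed hyperplane $H$ otherwise) and combine this with the fixed-hyperplane property of $G$ to isolate the two-point data $(\mu_a,\mu_b)$. Once this bookkeeping is in place, both the two-point intersection count and the tree combinatorics are short and self-contained, and summing over $v$ yields that every coefficient of $Z(\underline{X})$ equals $1$.
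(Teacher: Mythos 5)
Your proposal is correct and follows essentially the same route as the paper: both reduce via Proposition \ref{prop:homclass} and Lemma \ref{lemma:maxdegen} to showing that, for each weight vector $(m_1,\dots,m_n)$, exactly one component configuration can be moved by $G$ into a generic product of linear subspaces, with the criterion being ``no weight on $H$ and positive weight at both special points.'' The only (cosmetic) differences are that you verify this criterion by an explicit affine linear system in $(A,\lambda)$ where the paper uses a codimension count on $G$, and you identify the unique vertex directly as the deepest common ancestor of the support of the weight vector where the paper runs a top-down induction from the root.
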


\begin{proof}
Note that, by Proposition \ref{prop:homclass}, this is equivalent to proving the following: for any non-negative integers $m_1,\ldots,m_n$ satisfying $\sum m_i = d+1$, and generic linear subspaces $L_i \subseteq \PP^d$ of codimension $m_i$, there is exactly one component $X_v \subseteq X$ such that the $G$-action moves the component configuration $\pi_v(\underline{X}) \in (\PP^d)^n$ into $L := L_1\times \cdots \times L_n$.  Let us now fix these integers $m_i$ and refer to  each as the \emph{weight} of the $i^{\text{th}}$ point in a given configuration.

By Lemma \ref{lemma:maxdegen}, the configuration $\pi_v(\underline{X}) = (p_1,\ldots,p_n)$, $p_i = \varphi_{L_v}(q_i)\in\PP^d$, is supported along the hyperplane $H\subseteq \PP^d$ and at two points off $H$.  Write $\{1,\ldots,n\} = I\sqcup J\sqcup K$ where $\{p_i\}_{i\in I}$ and $\{p_i\}_{i\in J}$ are the two collections of points off $H$, and $\{p_i\}_{i\in K} \subseteq H$.  Set $m_I := \sum_{i\in I} m_i$, and similarly for $J$ and $K$.  Thus $m_I$ and $m_J$ are the total weight of points at the images of the two special points of $\overline{X_v}$, and $m_K$ is the total weight of points lying on $H$.

We claim that the $G$-action moves $\pi_v(\underline{X})$ into $L$ if and only if $m_K = 0$, $m_I > 0$, and $m_J > 0$.  This follows from the observations that $G$ cannot move any $p_i \in H$ into a generic positive codimension subspace, since $G$ fixes $H$ pointwise, and that moving $p_i$ into $L_i$ for all $i\in I$ is equivalent to moving a single point, the support of this collection, into a generic codimension $m_I$ subspace, and similarly for $J$.  Indeed, if $m_K=0$ and, say, $m_I = 0$, then $m_J = d+1$ and a codimension $d+1$ subspace of $\PP^d$ is empty; conversely, if $m_K=0$ and both $m_I$ and $m_J$ are at most $d$, then since the condition that $G$ moves a point off $H$ into a general linear subspace of codimension $m_I$ is a codimension $m_I$ condition on the space of matrices, and this holds independently for $m_J$, one can choose explicit coordinates for generic codimension $m_I$ and codimension $m_J$ linear subspaces and use the fact that $m_I+m_J = d+1 = \dim G$ to see that $G$ can indeed move a pair of points into these two subspaces, and hence the given configuration into $L$.  This verifies the claim.

Consider now the root $v_0$ of the tree associated to $X = \cup X_v$.  The root component configuration $\pi_{v_0}(\underline{X})$ has no points in $H$, so by the claim it can be moved by $G$ into a generic $L = \prod L_i$ if and only if $m_I > 0$ and $m_J >0$, with notation as in the preceding paragraph (since $K = \varnothing$ in this case so $m_K = 0$).  Moreover, no other component yields a configuration that can be moved into $L$ if $m_I > 0$ and $m_J >0$ since these conditions together with Corollary \ref{cor:configmap} imply that such a configuration has positive weight in $H$.  

So suppose that, without loss of generality, $m_J=0$ and $m_I = d+1$.  Let $v'$ be the daughter vertex of $v_0$ determined by the $I$-branch of the tree.  Define $I',J',K'$ as before, but now for this component $X_{v'}$.  Even though $K'$ may be non-empty now, the fact that $m_J = 0$ implies that $m_{K'} = 0$.  Therefore, the claim implies that $G$ moves $\pi_{v'}(\underline{X})$ into $L$ if and only if $m_{I'} >0$ and $m_{J'} > 0$.  Moreover, as before no other component configuration can be moved into $L$ when these conditions hold again since any such configuration has positive weight in $H$.

Repeating this argument inductively, relying on the finite tree structure, completes the proof.  Indeed, at the end of the induction we arrive at a leaf of the tree, so a component with both special points being individual marked points, say $q_1$ and $q_2$; then $m_1 + m_2 = d+1$, since all other $m_i =0$ in this case, and $m_i \le d$ for all $1 \le i \le n$ by definition, so $m_1 > 0$ and $m_2 > 0$, as desired.
\end{proof}

\subsection{Extending the rational map}

By Corollary \ref{cor:BiratCGK}, the CGK-to-Chow map yields a morphism \[T_{d,n} \supseteq T^\circ_{d,n} \xrightarrow{\rho^\circ} (\PP^d)^n\ChowQ G \subseteq \Chow((\PP^d)^n)\] defined on some open subset $T^\circ_{d,n}$ of $T_{d,n}$.  Our goal now is to show that we can take $T^\circ_{d,n} = T_{d,n}$, or in other words, that $\rho^\circ$ extends to a regular morphism $T_{d,n} \rightarrow \Chow((\PP^d)^n)$.  Note that (1) there can be at most one such extension, since $T^\circ_{d,n}$ is dense and the Chow variety is separated, and (2) the image of such an extension is contained in $(\PP^d)^n\ChowQ G$, since the Chow quotient is closed in the Chow variety.  We will therefore denote this extension $\rho : T_{d,n} \rightarrow (\PP^d)^n\ChowQ G$ once we show it exists, and hence will have promoted the CGK-to-Chow map to a CGK-to-Chow morphism.

Consider a $\mathbb{C}$-algebra $R$ that is a discrete valuation ring with maximal ideal $\mathfrak{m}$ and fraction field $K$.   Since $T_{d,n}$ is proper, the valuative criterion implies that any morphism $\psi  : \Spec K \rightarrow T^\circ_{d,n}$ extends to a morphism $\overline{\psi} : \Spec R \rightarrow T_{d,n}$.  Since the Chow variety is also proper, the composition $\psi\rho^\circ$ extends to a morphism $\overline{\psi\rho^\circ} : \Spec R \rightarrow \Chow((\PP^d)^n)$.  We now wish to apply \cite[Theorem 7.3]{Giansiracusa-Gillam}, which says that $\rho^\circ$ extends as desired if and only if for any DVR and any $\psi$ as above, the point $\overline{\psi\rho^\circ}(\mathfrak{m})\in \Chow((\PP^d)^n)$ is uniquely determined by the point $\overline{\psi}(\mathfrak{m}) \in T_{d,n}$.

Even though we have only described the closed points of $T_{d,n}$ in terms of stable trees of $\PP^d$s, and the image of $\psi$ is not a closed point, it follows from general considerations that in fact the set of all field-valued points of $T_{d,n}$ is in bijection with the set of all stable $n$-pointed trees of projective spaces over a field, where a $k$-point corresponds to a tree of $\PP^d_k$s.  Indeed, by \cite[Theorem 3.6.2]{Chen-Gibney-Krashen}, pulling back $T_{d,n}$ and its universal family (cf., Remark \ref{rem:unifam}) along a field extension $\Spec k \rightarrow \Spec \mathbb{C}$ yields $T_{d,n}$ as a $k$-scheme (by which we mean it represents the corresponding functor of screens, see \cite[Definition 3.6.1]{Chen-Gibney-Krashen}) and then $k$-points of this scheme are closed and hence are in bijection with trees of projective spaces over $k$.  Thus, by this  observation, by Remark \ref{rem:unifam}, and by shrinking $T^\circ_{d,n}$ if necessary to be contained in the open stratum of $T_{d,n}$, we are reduced to the following situation.

Consider a flat, proper 1-parameter family of rooted, pointed, stable trees of projective spaces $X_R \rightarrow \Spec R$ where the general fiber $X_K \rightarrow \Spec K$ is smooth with the marked points in general position and the special fiber $X_\mathbb{C} \rightarrow \Spec \mathbb{C}$ is an arbitrary closed point of $T_{d,n}$.  (To ease notation here we briefly omit reference to the root hyperplane and marked points, even though they are part of the data of this family.)  The cycle of the $G$-orbit closure of the point configuration in $X_K \cong \PP^n_K$, namely $\rho^\circ(X_K) \in \Chow((\PP^d)^n)$, has dimension $d+1$ and homology class $\delta$ (cf., \S\ref{subsection:delta}), and it has a unique limit, call it $\lim \rho^\circ(X_K)$, in this Chow variety with respect to the 1-parameter family.  We must show that this limit is uniquely determined by $X_\mathbb{C} \in T_{d,n}(\mathbb{C})$, independent of its smoothing to $X_K$.   We will accomplish this by showing that the limiting cycle is none other than the configuration cycle $Z(X_\mathbb{C})$ from Definition \ref{def:compconfig}.  We first state a lemma that applies here, and more generally without assuming $X_K$ is smooth.

\begin{lemma}\label{lem:DVR}
For an arbitrary $X_K \in T_{d,n}(K)$, we have $Z(X_\mathbb{C}) \subseteq \lim Z(X_K)$ as subvarieties of $(\PP^d)^n$. 
\end{lemma}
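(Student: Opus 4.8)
The plan is to reduce the stated containment to a ``zooming in'' statement on a single component of the generic fiber, and then to verify that statement using the homothety description of $G$. Since cycle addition is a morphism of Chow varieties, the cycle-theoretic limit is additive over the summands of $Z(X_K)$, so $\lim Z(X_K)=\sum_{v\in V_K}\lim\overline{G\pi_v(X_K)}$. The one-parameter family $X_R$ is \emph{less} degenerate at its generic fiber, so it induces a surjective contraction $c\colon V_\mathbb{C}\to V_K$ of rooted trees under which each generic component $X_v$ specializes to the subtree $Y_v:=c^{-1}(v)\subseteq X_\mathbb{C}$. As $c$ is surjective and $Z(X_\mathbb{C})=\sum_{w\in V_\mathbb{C}}\overline{G\pi_w(X_\mathbb{C})}$, it suffices to prove, for every $v\in V_K$ and every $w\in Y_v$, the containment
\[\overline{G\pi_w(X_\mathbb{C})}\subseteq\lim\overline{G\pi_v(X_K)}\]
of subvarieties of $(\PP^d)^n$. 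Note that this reduction treats an arbitrary generic fiber $X_K$ uniformly, component by component, with no smoothness hypothesis.

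The key structural point is that the limiting cycle $\lim\overline{G\pi_v(X_K)}$ is $G$-invariant. Indeed, $\overline{G\pi_v(X_t)}$ is a $G$-invariant cycle for every $t$, the $G$-fixed locus of the induced $G$-action on the Chow variety is closed, and the generic point of our family lands in it; hence so does the special point. Consequently, to obtain the displayed containment it is enough to produce a \emph{single} $\mathbb{C}$-point lying on the limit of $\overline{G\pi_v(X_K)}$ that is a $G$-translate of $\pi_w(X_\mathbb{C})$: its orbit closure, namely all of $\overline{G\pi_w(X_\mathbb{C})}$, then lies in the limit by $G$-invariance and closedness. We produce such a point as a limit of points lying on the generic orbit closures.

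Recall (cf.\ Proposition \ref{prop:matrix}) that on the affine chart $\mathbb{A}^d=\PP^d\setminus H$ the group $G$ acts as the full group of homotheties $y\mapsto\lambda y+b$ with $\lambda\in\mathbb{C}^*$, $b\in\mathbb{C}^d$. For each $w\in Y_v$ I will construct an element $g_w\in G(K)$ --- after a harmless finite base change on $R$ making the requisite fractional powers of a uniformizer available --- of the form $g_w(t)\colon y\mapsto\epsilon_w(t)^{-1}\bigl(y-c_w(t)\bigr)$, where the scale $\epsilon_w(t)\to0$ and center $c_w(t)$ are the data at which the component $X_w$ bubbles off, read off from the valuative structure of the Fulton--MacPherson degeneration $X_R$. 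Since $g_w(t)\cdot\pi_v(X_t)$ lies on $\overline{G\pi_v(X_t)}$ for every $t$, its specialization lies on $\lim\overline{G\pi_v(X_K)}$ by properness. The crux of the proof is the claim that this specialization,
\[\lim_{t\to0}g_w(t)\cdot\pi_v(X_t),\]
is a $G$-translate of $\pi_w(X_\mathbb{C})$; granting this, the previous paragraph concludes the argument, and summing over $v$ and $w\in Y_v$ yields $Z(X_\mathbb{C})\subseteq\lim Z(X_K)$. For the root $w_0$ of $Y_v$ one may take $g_{w_0}=\mathrm{id}$: extending the Cartier divisor $D_v$ over $R$ exhibits $\varphi_{L_v}$ specializing to $\varphi_{L_{w_0}}$, so $\pi_v(X_t)$ specializes directly to $\pi_{w_0}(X_\mathbb{C})$.

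The main obstacle is verifying this crux, which is a bookkeeping of where each marked point lands under the rescaling $g_w(t)$ in the limit, checked against the description of $\varphi_{L_w}$ in Corollary \ref{cor:configmap}. Working in local coordinates for the iterated blow-up $X_R$, one tracks that: the marked points whose sections meet $X_w$ spread out to their prescribed images $\varphi_{L_w}(q_i)$ in $\overline{X_w}\cong\PP^d$; the marked points supported on components strictly below $w$ collapse to the single points those components determine; and the marked points on ancestral or incomparable components escape to infinity, landing on the fixed hyperplane $H$. This is precisely the support of $\pi_w(X_\mathbb{C})$ prescribed by Corollary \ref{cor:configmap}, so the rescaled limit is indeed a $G$-translate of $\pi_w(X_\mathbb{C})$, completing the proof.
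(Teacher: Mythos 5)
Your argument reaches the right conclusion by a genuinely different route from the paper's, and the reductions you make are all valid: additivity of the Chow limit over the summands of $Z(X_K)$, the contraction $c\colon V_{\mathbb{C}}\to V_K$ of dual trees, $G$-invariance and closedness of the support of the limit cycle, and the sufficiency of exhibiting a single $G$-translate of $\pi_w(X_\mathbb{C})$ on that support. Your version even yields the slightly sharper statement that $\overline{G\pi_w(X_\mathbb{C})}$ lies in the limit of the \emph{individual} cycle $\overline{G\pi_{c(w)}(X_K)}$. The paper, by contrast, never rescales by group elements: for each vertex $v$ of the \emph{special} fiber it extends the Cartier divisor $D_v$ over all of $X_R$ (extending $d$ smooth points spanning the initial hyperplane to sections and rerunning the inductive construction), uses the cohomology vanishing of Proposition \ref{prop:configLB} together with Grauert's theorem to extend the sections of $L_v$, and thereby exhibits $\varphi_{L_v}$ on $X_\mathbb{C}$ as the specialization of $\varphi_{L_w}$ on $X_K$; the containment then follows by tracking the marked-point sections, exactly as in your base case $g_{w_0}=\mathrm{id}$. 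The trade-off is that essentially all of the content of your proof sits in the ``crux'' you leave as a sketch: you must actually extract the center $c_w(t)$ and scale $\epsilon_w(t)$ from the valuative structure of the family and verify the three limiting behaviors against Corollary \ref{cor:configmap}. The delicate point there is that the marked points escaping to $H$ must land at \emph{exactly} the points $\varphi_{L_w}(q_i)\in H$, since $G$ fixes $H$ pointwise and so no subsequent $G$-translation can repair a mismatch; one has to check that the limiting direction of approach to $c_w(t)$ agrees with the projection from the point determined by $w$, which is what Corollary \ref{cor:configmap} prescribes on ancestral components. That local bookkeeping in the Fulton--MacPherson/screen coordinates is precisely what the paper's line-bundle argument performs coordinate-freely (indeed, your $g_w(t)$ are recoverable as the elements of $G(K)$ relating the generic-fiber restrictions of the extended linear systems for two special-fiber vertices over the same $v$), so to make your proof self-contained you would need to carry out that computation explicitly.
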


\begin{proof}
By the definition of the configuration cycle, it suffices to prove this one component configuration at a time, i.e., that $\pi_v(X_\mathbb{C}) \subseteq \lim Z(X_K)$ for each vertex $v$ in the dual graph of $X_\mathbb{C}$.  Moreover, since the limit cycle is closed and $G$-invariant, it suffices to show that $\varphi_{L_v} : X_\mathbb{C} \rightarrow \PP^d$ sends the $n$-tuple of marked points of $X_\mathbb{C}$ into $\lim Z(X_K) \subseteq (\PP^d)^n$.  We claim that $\varphi_{L_v}$ extends to a morphism $X_R \rightarrow \PP^d_R$ which restricts to the map $\varphi_{L_{w}} : X_K \rightarrow \PP^d_K$ for some vertex $w$ of the dual graph of $X_K$.  We will be done once we verify this claim, since $\varphi_{L_{w}}$ sends the marked points of $X_K$ into $Z(X_K)$ so by continuity of the morphism over $R$ their limits, the marked points of $X_\mathbb{C}$, get sent into the limit of $Z(X_K)$.  So we now turn to the claim about extending $\varphi_{L_v}$ to our 1-parameter family.

Recall that the Cartier divisor $D_v$ inducing the morphism $\varphi_{L_{v}}$ is supported in a chain of components of $X_\mathbb{C}$ starting from the root component and ending with the $v$-component, and the restriction to each component in the chain is a hyperplane. Moreover, recall that all these hyperplanes are determined by the root one and the discrete choice of vertex $v$.  Choose a collection of $d$ smooth points of $X_\mathbb{C}$ which span this $D_v$ hyperplane in the root component.  By viewing these points as sections of $X_\mathbb{C} \rightarrow \Spec\mathbb{C}$ and using the fact that they are smooth, they extend to $d$ smooth sections of the family $X_R \rightarrow \Spec R$.  By taking the span of these sections we get a hyperplane in the root component of $X_R$.  We can then follow the iterative recipe used to construct these Cartier divisors to construct a Cartier divisor $D$ on $X_R$ whose restriction to $X_\mathbb{C}$ is $D_v$ and whose restriction to $X_K$ is of the form $D_w$ for a vertex $w$ of the dual graph of $X_K$ (the corresponding component being one that limits to the $v$-component of $X_\mathbb{C}$).  Thus we have a line bundle $L := \mathcal{O}_{X_R}(D)$ on $X_R$ whose restrictions to $X_K$ and $X_\mathbb{C}$ are $L_w$ and $L_v$, respectively.  By Grauert's Theorem \cite[Corollary III.12.9]{Hartshorne}, using the vanishing of higher cohomology proven in Proposition \ref{prop:configLB}, we see that the global sections of $L_v$ extend to yield the global sections of $L$, so the complete linear system $|L|$ induces a morphism with the desired properties.
\end{proof}

\begin{corollary}\label{cor:all1s}
For any $\underline{X}\in T_{d,n}$, the homology class of the configuration cycle $Z(\underline{X})$ has all coefficients 1.  In particular, this holds for the generic orbit closure class $\delta$.
\end{corollary}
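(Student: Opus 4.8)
Since $Z(\underline{X})=\sum_{v\in V}[\overline{G\pi_v(\underline{X})}]$ and, by Proposition \ref{prop:homclass}, each summand has coefficient $0$ or $1$ on every K\"unneth basis element $\beta=[\PP^{m_1}]\otimes\cdots\otimes[\PP^{m_n}]$ (with $\sum_i m_i=d+1$), the class $[Z(\underline{X})]$ automatically has non-negative integer coefficients, and it suffices to show that for each $\beta$ exactly one vertex contributes. The assertion about $\delta$ is then the special case of the open stratum, where $\underline{X}$ has a single vertex and $Z(\underline{X})=[\overline{Gp}]$.

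I would first treat $\delta$ by degeneration. Take a $1$-parameter family $X_R\to\Spec R$ with $X_K$ a smooth generic configuration and $X_{\mathbb{C}}$ a maximally degenerate point $\underline{X}_0$. The orbit closure $Z(X_K)=\overline{Gp_K}$ has class $\delta$, so its flat limit $\lim Z(X_K)$ does too; being the limit of $G$-invariant cycles of dimension $\dim G=d+1$, this limit is a multiplicity-free union of $G$-orbit closures (multiplicity-free because $\delta$ has coefficients $\le 1$). Lemma \ref{lem:DVR} gives the containment $Z(\underline{X}_0)\subseteq\lim Z(X_K)$, and since each component of $Z(\underline{X}_0)$ is an irreducible $(d+1)$-dimensional $G$-invariant subvariety it must be a component of the limit; hence $Z(\underline{X}_0)\le\lim Z(X_K)$ as effective cycles and $[Z(\underline{X}_0)]\le\delta$ coefficientwise. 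Because $[Z(\underline{X}_0)]$ has all coefficients $1$ (Proposition \ref{prop:maxdegenclass}) while $\delta$ has all coefficients $\le 1$, I conclude $\delta$ has all coefficients equal to $1$.

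For a general closed point $\underline{X}$ the upper bound follows the same way: degenerating a smooth generic configuration to $\underline{X}$ and invoking Lemma \ref{lem:DVR} gives $Z(\underline{X})\subseteq\lim Z(X_K)$ with $[\lim Z(X_K)]=\delta$, so $[Z(\underline{X})]$ has all coefficients $\le 1$. The lower bound is the crux, and the obstacle is that $\underline{X}$ is a closed point and so cannot be realized as the generic fiber of a family degenerating it further. I would get around this by noting that $\underline{X}\mapsto Z(\underline{X})$ is a morphism from each boundary stratum $S$ to the Chow variety---the configurations $\pi_v$ are cut out by the relative analogues of the bundles $L_v$ over the universal family restricted to $S$---so $[Z(\cdot)]$ is constant on the connected stratum $S$ and equals its value at the generic point $\xi$ of $S$. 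As $\overline{S}=\overline{\{\xi\}}$ contains a maximally degenerate point $\underline{X}_0$, I degenerate $\xi$ to $\underline{X}_0$ and apply Lemma \ref{lem:DVR} and Proposition \ref{prop:maxdegenclass} once more to obtain $[Z(\underline{X})]=[Z(\xi)]\ge[Z(\underline{X}_0)]=1$ on every $\beta$. Together with the upper bound this forces all coefficients to equal $1$.

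Underlying this is a purely combinatorial fact that also yields a direct self-contained proof: by Corollary \ref{cor:configmap} the $i$-th point of $\pi_v(\underline{X})$ lies off $H$ precisely when the component carrying $q_i$ is $\ge v$, and since $G$ fixes $H$ pointwise, a dimension count as in Proposition \ref{prop:maxdegenclass} shows that $G$ moves $\pi_v(\underline{X})$ into a generic $L=\prod_i L_i$ if and only if $v$ equals the deepest common ancestor $\bigwedge\{\,c_i : m_i>0\,\}$ of the vertices $c_i$ whose components carry the positively weighted marked points (vertices $v$ below this meet funnel all weight into a single off-$H$ point, while vertices not above some such $c_i$ acquire positive weight on $H$). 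This meet is a unique vertex, giving coefficient exactly $1$ on $\beta$ directly. I expect the main difficulty in the degeneration-based route to be exactly the lower bound, i.e.\ transferring the maximally degenerate computation back to the given closed point, for which the constancy of $[Z(\cdot)]$ along strata is the essential input.
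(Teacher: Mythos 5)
Your degeneration argument is essentially the paper's own proof, which is a three-line squeeze: Proposition \ref{prop:homclass} bounds every coefficient of $\delta$ by $1$, Lemma \ref{lem:DVR} shows the class of $Z(\cdot)$ can only decrease under specialization, and Proposition \ref{prop:maxdegenclass} pins the bottom of the chain at the all-ones class. The one place you go beyond the published argument is the lower bound for an arbitrary closed point $\underline{X}$: the paper simply asserts that the class ``can only decrease when $\underline{X}$ specializes'' without saying how a closed point is specialized further, and your fix --- constancy of $[Z(\cdot)]$ on each stratum $S$, so that one may replace $\underline{X}$ by the generic point $\xi$ of $S$ (a $K$-point, to which Lemma \ref{lem:DVR} applies since it is stated for arbitrary $X_K\in T_{d,n}(K)$) and then specialize $\xi$ to a maximally degenerate point of $\overline{S}$ --- is exactly the right way to make that sentence precise. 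Your closing combinatorial argument is a genuinely different and arguably cleaner route: identifying the unique contributing vertex as the meet $\bigwedge\{c_i : m_i>0\}$ generalizes the induction in the proof of Proposition \ref{prop:maxdegenclass} from maximally degenerate trees to arbitrary ones, and it proves the corollary directly, with no recourse to Lemma \ref{lem:DVR} or to degeneration at all; it also gives more, namely that \emph{every} configuration cycle $Z(\underline{X})$ is multiplicity-free in homology for combinatorial reasons. The only step there that deserves an explicit line is the ``dimension count'': for general $\underline{X}$ the off-$H$ support of $\pi_v(\underline{X})$ may consist of more than two points, so you need the claim of Proposition \ref{prop:maxdegenclass} in the stronger form that a unique $g\in G$ exists if and only if no positive weight lies on $H$ and the positively weighted points are supported at at least two distinct locations off $H$. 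This is true --- the linear part of $(t,s)\mapsto(tx_j+s)_j$ is injective as soon as two support points differ, and each support point then carries total weight at most $d$ --- but it is strictly more than the two-support-point case the paper actually verifies, so it should be proved rather than cited.
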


\begin{proof}
By Proposition \ref{prop:homclass}, the class $\delta$ of the configuration cycle of a generic point in the interior of $T_{d,n}$ has all coefficients 0 or 1.  By Lemma \ref{lem:DVR}, the homology class of $Z(\underline{X})$ can only decrease when $\underline{X}$ specializes in $T_{d,n}$.  But by Proposition \ref{prop:maxdegenclass}, the class after maximally degenerating has all coefficients 1, so it must have had these coefficients from the beginning.
\end{proof}

We now return to the situation of interest above.

\begin{proposition}
With notation as above, we have $\lim \rho^\circ(X_K) = Z(X_\mathbb{C})$ in $\Chow((\PP^d)^n)$.
\end{proposition}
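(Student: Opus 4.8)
The plan is to prove equality of the two effective cycles $\lim \rho^\circ(X_K)$ and $Z(X_\mathbb{C})$ by upgrading the containment supplied by Lemma~\ref{lem:DVR} to an equality, using a homology-class comparison together with effectivity. The first observation is that, since $X_K$ is smooth with a single vertex, its configuration cycle coincides with the generic orbit closure cycle, $Z(X_K) = \rho^\circ(X_K)$; thus Lemma~\ref{lem:DVR} yields that the support of $Z(X_\mathbb{C})$ is contained in the support of $\lim \rho^\circ(X_K)$. Both cycles are effective of pure dimension $d+1$, and both carry the class $\delta$: the limit does because the class is constant in the family (we remain in the component $\Chow((\PP^d)^n,\delta)$), and $Z(X_\mathbb{C})$ does by Corollary~\ref{cor:all1s}. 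The entire argument then consists of playing this equality of classes against positivity.

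First I would establish that $Z(X_\mathbb{C}) = \sum_{v\in V}[\overline{G\pi_v(X_\mathbb{C})}]$ is reduced, i.e.\ that the orbit closures appearing are pairwise distinct. Each summand has all K\"unneth coefficients in $\{0,1\}$ by Proposition~\ref{prop:homclass}, whereas the total class $\delta$ has all coefficients equal to $1$ by Corollary~\ref{cor:all1s}; a repeated component would force some coefficient to be at least $2$, a contradiction. Hence the map $v \mapsto \overline{G\pi_v(X_\mathbb{C})}$ is injective and $[Z(X_\mathbb{C})] = \sum_v [\overline{G\pi_v(X_\mathbb{C})}]$ carries each component with multiplicity one.

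Now write $\lim \rho^\circ(X_K) = \sum_i n_i W_i$ with the $W_i$ irreducible of dimension $d+1$ and $n_i \ge 1$. Each $\overline{G\pi_v(X_\mathbb{C})}$, being irreducible of dimension $d+1$ and contained in the $(d+1)$-dimensional support $\bigcup_i W_i$, must equal some $W_i$; call these components ``old'' and the rest ``new''. Comparing classes gives
\[0 = [\lim \rho^\circ(X_K)] - [Z(X_\mathbb{C})] = \sum_{\text{old } i}(n_i - 1)[W_i] + \sum_{\text{new } i} n_i [W_i],\]
where every coefficient $n_i - 1 \ge 0$ and $n_i \ge 1$ is non-negative (reducedness of $Z(X_\mathbb{C})$ is what makes the old coefficients $n_i-1$ rather than $n_i - m_i$ with $m_i \ge 2$). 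Since each $[W_i]$ is a nonzero effective class, it has non-negative K\"unneth coordinates with at least one strictly positive; reading off one coordinate at a time, a non-negative combination of such classes vanishes only when all coefficients vanish. Therefore there are no new components and every old $n_i$ equals $1$, giving $\lim \rho^\circ(X_K) = \sum_v [\overline{G\pi_v(X_\mathbb{C})}] = Z(X_\mathbb{C})$.

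The crux, and the step I expect to require the most care, is this positivity input: it is precisely what prevents the limit cycle from acquiring extra components or higher multiplicities along the orbit closures of $Z(X_\mathbb{C})$. The sharp knowledge that $\delta$ has all coefficients $1$ (Corollary~\ref{cor:all1s}), combined with the $0/1$ dichotomy for a single orbit closure (Proposition~\ref{prop:homclass}), is exactly what powers the argument; with only a coarse understanding of $\delta$ one would obtain merely a containment of cycles instead of the desired equality.
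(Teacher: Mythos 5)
Your proof is correct and takes essentially the same route as the paper's: combine the support containment from Lemma~\ref{lem:DVR} with the fact that both cycles carry the class $\delta$, and use effectivity/positivity of K\"unneth coefficients to upgrade this to an equality of cycles. The paper compresses this into one sentence; your write-up additionally makes explicit the reducedness of $Z(X_\mathbb{C})$ and the component-by-component bookkeeping that the paper leaves implicit, which is a faithful filling-in rather than a different argument.
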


\begin{proof}
The containment of supports $Z(X_\mathbb{C}) \subseteq \lim \rho^\circ(X_K)$ proven in Lemma \ref{lem:DVR} implies that it suffices to show that the corresponding homology classes in $H_{2(d+1)}((\PP^d)^n,\mathbb{Z})$ satisfy the inequality $[\lim \rho^\circ(X_K)] \le [Z(X_\mathbb{C})]$, but these are both equal to $\delta$.
\end{proof}

This concludes the proof of the following:
\begin{theorem}
The CGK-to-Chow map is in fact a regular morphism $\rho : T_{d,n} \rightarrow (\PP^d)^n\ChowQ G$.
\end{theorem}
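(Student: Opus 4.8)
The plan is to promote the birational map $\rho$ of Corollary~\ref{cor:BiratCGK} to a morphism by verifying the extension criterion of \cite[Theorem 7.3]{Giansiracusa-Gillam}. Since $T_{d,n}$ is proper and irreducible, and $\rho^\circ$ is already regular on a dense open $T^\circ_{d,n}$, it suffices to check that along every one-parameter degeneration the limiting cycle in $\Chow((\PP^d)^n)$ depends only on the limit point in $T_{d,n}$, not on the chosen path to it. Concretely, for a $\mathbb{C}$-algebra DVR $R$ with fraction field $K$ and maximal ideal $\mathfrak{m}$, one takes a map $\psi:\Spec K \to T^\circ_{d,n}$, uses properness of both $T_{d,n}$ and the Chow variety to extend over $\Spec R$, and must show that the special-fiber Chow point $\overline{\psi\rho^\circ}(\mathfrak{m})$ is a function of $\overline{\psi}(\mathfrak{m})$ alone.

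First I would reinterpret such a degeneration geometrically. Using the field-valued-point description of $T_{d,n}$ (via \cite[Theorem 3.6.2]{Chen-Gibney-Krashen} and Remark~\ref{rem:unifam}), the data amounts to a flat proper family $X_R \to \Spec R$ of stable rooted pointed trees of $\PP^d$s whose generic fiber $X_K \cong \PP^d_K$ carries marked points in general position and whose special fiber $X_\mathbb{C}$ is the given boundary point. The generic Chow point is then the $G$-orbit-closure cycle $\rho^\circ(X_K)$, of class $\delta$, and it has a unique limit $\lim\rho^\circ(X_K)$ in the Chow variety. The candidate for this limit, manifestly intrinsic to $X_\mathbb{C}$, is the configuration cycle $Z(X_\mathbb{C})$ of Definition~\ref{def:compconfig}.

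The heart of the argument is therefore to prove $\lim\rho^\circ(X_K) = Z(X_\mathbb{C})$, which I would establish by combining an inclusion of supports with an equality of homology classes. For the inclusion $Z(X_\mathbb{C}) \subseteq \lim\rho^\circ(X_K)$ (Lemma~\ref{lem:DVR}), the key is to spread out each map $\varphi_{L_v}$ over the whole family: one chooses $d$ smooth sections of $X_R$ spanning the root hyperplane of $D_v$, propagates the iterative hyperplane construction to obtain a Cartier divisor $D$ on $X_R$ restricting to $D_v$ on $X_\mathbb{C}$ and to some $D_w$ on $X_K$, and then invokes Grauert's theorem together with the cohomology vanishing of Proposition~\ref{prop:configLB} to extend sections and hence the morphism. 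Since $\varphi_{L_w}$ carries the marked points of $X_K$ into $\rho^\circ(X_K)$, continuity over $R$ forces the images of the marked points of $X_\mathbb{C}$ under $\varphi_{L_v}$, and hence their entire $G$-orbit closure, into the limit cycle. For the class equality, both sides equal $\delta$: the generic side by definition, and $Z(X_\mathbb{C})$ by Corollary~\ref{cor:all1s}, whose proof uses that specialization can only decrease coefficients while the maximally-degenerate computation of Proposition~\ref{prop:maxdegenclass} pins every coefficient at $1$.

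I expect the main obstacle to be precisely this interplay between geometry and numerics. The support inclusion alone would leave open the possibility that $\lim\rho^\circ(X_K)$ carries extra components beyond $Z(X_\mathbb{C})$; ruling this out is not a geometric but a numerical matter, hinging on the incidence count of Proposition~\ref{prop:homclass} as evaluated on the maximally degenerate stratum. Once both the inclusion $Z(X_\mathbb{C}) \subseteq \lim\rho^\circ(X_K)$ and the class identity $[\lim\rho^\circ(X_K)] = \delta = [Z(X_\mathbb{C})]$ are in hand, equality of the two effective cycles is immediate; the limit then depends only on $X_\mathbb{C}$, and \cite[Theorem 7.3]{Giansiracusa-Gillam} delivers the desired regular morphism $\rho : T_{d,n} \rightarrow (\PP^d)^n\ChowQ G$.
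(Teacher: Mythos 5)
Your proposal follows the paper's own argument essentially step for step: the extension criterion of \cite[Theorem 7.3]{Giansiracusa-Gillam}, the reinterpretation of a DVR-valued point as a flat family of stable trees, the support inclusion $Z(X_\mathbb{C}) \subseteq \lim\rho^\circ(X_K)$ via spreading out $\varphi_{L_v}$ with Grauert's theorem, and the homology-class comparison through the maximally degenerate computation. This matches the paper's proof both in structure and in the key technical inputs, so there is nothing to add.
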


\section{Isomorphism onto the normalization}\label{sec:iso}

In this section we conclude the paper by proving that $T_{d,n}$ is isomorphic to the normalization of the Chow quotient $(\PP^d)^n\ChowQ G$ for any $d \ge 1$, and that for $d=1$ it is isomorphic to the Chow quotient $(\PP^1)^n\ChowQ (\Gm\rtimes\Ga)$ itself.

\subsection{The case $d=1$}

Recall that $T_{1,n} \cong \M_{0,n+1}$.  We can therefore use a trick from \cite{Giansiracusa-Gillam} to reduce the problem of showing the CGK-to-Chow morphism $\rho : T_{1,n} \rightarrow (\PP^1)^n\ChowQ G$ is an isomorphism for all $n$ to the case of $n=3$.  First, since $T_{1,n}$ is proper we know that $\rho$ is surjective, so it suffices to show that the map $T_{1,n} \rightarrow \Chow((\PP^1)^n)$ is an embedding, i.e., an isomorphism onto its image.  For each $I\subseteq\{1,\ldots,n\}$ of size $3$ there is a forgetful map $T_{d,n} \rightarrow T_{d,3}$; this holds for any $d$, by \cite[Remark 3.6.6]{Chen-Gibney-Krashen}, but we only need it for $d=1$ where these maps take the form $\M_{0,n+1} \rightarrow \M_{0,I\cup\{n+1\}}$.  For each such $I$ there is also a morphism $\Chow((\mathbb{P}^1)^n) \rightarrow \Chow((\mathbb{P}^1)^3)$ induced by proper push-forward of cycles \cite[Theorem 6.8]{Kollar-chow} along the projection $(\mathbb{P}^1)^n \rightarrow (\mathbb{P}^1)^3$.  This yields a commutative diagram
\[\xymatrix{T_{1,n} \ar[r] \ar[d] & \Chow((\mathbb{P}^1)^n) \ar[d] \\ \prod_{|I| = 3} T_{1,3} \ar[r] & \prod_{|I|=3}\Chow((\mathbb{P}^1)^3).}\]
Indeed, by separatedness it suffices to check commutativity on the open stratum in $T_{1,n}$, and it holds there since the projection maps $(\mathbb{P}^1)^n \rightarrow (\mathbb{P}^1)^3$ are $G$-equivariant.  We can thereby conclude the proof that $\rho$ is an isomorphism once we demonstrate two key facts: (1) the product of $n=3$ forgetful maps on $T_{1,n}$ is an embedding, and (2) the $d=1,n=3$ case of the CGK-to-Chow map \[\PP^1 \cong T_{1,3} \rightarrow \Chow(\PP^1\times\PP^1\times\PP^1)\] is an embedding.

For the first item, we note that this is equivalent to the following:

\begin{theorem}\label{thm:forgetful}
For any $n \ge 4$, the product of forgetful maps \[\M_{0,n} \rightarrow \prod_{I\in\binom{[n-1]}{3}}\M_{0,I\cup\{n\}} \cong (\PP^1)^{\binom{n-1}{3}}\] is an embedding.
\end{theorem}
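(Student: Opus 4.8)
The plan is to prove the stronger statement that the product of forgetful maps, which I denote $f : \M_{0,n} \to (\PP^1)^{\binom{n-1}{3}}$, is a closed immersion, proceeding by induction on $n$. The base case $n=4$ is trivial, as there is a single factor and $f$ is the identity map $\M_{0,4}\to\M_{0,4}\cong\PP^1$. For the inductive step, since $\M_{0,n}$ is smooth and proper and the target is separated, it suffices to verify that $f$ is injective on closed points and that the differential $df$ is injective on Zariski tangent spaces at every closed point; for a proper morphism of smooth varieties over $\mathbb{C}$ these two conditions imply that $f$ is a closed immersion. Throughout I identify each factor $\M_{0,I\cup\{n\}}\cong\PP^1$ via the cross-ratio, whose three boundary points $\{0,1,\infty\}$ record the three quartet topologies of the four labels $I\cup\{n\}$.

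Injectivity I would establish in two stages. First, the \emph{combinatorial type} (dual tree with its leaf labelling) is recovered from the image: a factor-coordinate lands in $\{0,1,\infty\}$ precisely when the four corresponding marked points are separated by a node, and it records which pairing occurs. The stability condition forces every node to partition the markings into two blocks each of size at least two, so for any node the block not containing $n$ supplies two labels $y,z$ while the block containing $n$ supplies a further label $x$; the quartet $\{n,x,y,z\}$ is then split by that node. Hence every reducible curve has some factor-coordinate in the boundary, and more precisely the image determines the quartet topology of \emph{every} four-element subset containing $n$. Rooting the tree at the leaf $n$, these quartet topologies are exactly the rooted triples of the induced rooted tree on $\{1,\dots,n-1\}$, and a classical combinatorial fact asserts that a rooted tree is determined by its rooted triples; thus the combinatorial type is recovered. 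Second, having fixed a type, I restrict $f$ to the corresponding boundary stratum. On a boundary divisor $\M_{0,A\cup\{\star\}}\times\M_{0,B\cup\{\bullet\}}$ (with $n\in B$ and $\star,\bullet$ the two branches of the node), a case check on the number of the four labels lying in $A$ shows that each factor-coordinate of $f$ is either constant or factors through one projection followed by a forgetful map on that factor. The pleasant bookkeeping is that the quartets through $n$ restrict to \emph{precisely} all quartets through $n$ on the $B$-side and all quartets through the node $\star$ on the $A$-side; since both factors are moduli spaces with strictly fewer markings, the inductive hypothesis makes the two resulting products of forgetful maps embeddings, so $f$ is injective on each stratum. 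Combined with the recovery of the combinatorial type, this yields global injectivity.

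For the tangent map I again split into interior and boundary. On the open stratum $M_{0,n}$ this is a short computation: normalizing three points to $0,1,\infty$ with the distinguished label $n$ placed at $\infty$, the quartets $\{1,2,j,n\}$ return the remaining coordinates $p_j$ directly, so $f$ restricts to a coordinate chart and $df$ is injective. At a boundary point I use the recursive structure together with the simple-normal-crossings geometry: the tangent space decomposes as the tangent space to the stratum plus one normal line per node. The tangential directions are controlled by the inductive embeddings of the two factors, exactly as in the injectivity argument. Each normal direction corresponds to smoothing a single node, and the key local observation is that the factor-coordinate attached to a quartet split by that node is constant along the stratum but moves into the interior of its $\PP^1$ to first order in the smoothing parameter, so its derivative in that normal direction is nonzero. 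Assembling the tangential contributions from the factors with one split-quartet per node shows $df$ is injective.

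The main obstacle I anticipate is precisely this last point: controlling the normal (smoothing) directions at deep boundary strata. One must verify that the split-quartet coordinates, one attached to each node, detect the several smoothing parameters independently and do not interfere with the tangential coordinates coming from the factors. This is where the argument is most delicate, requiring the explicit local description of how a cross-ratio degenerates as a node is smoothed, organized compatibly with the simple-normal-crossings structure of the boundary; the recursive factorization of forgetful maps along boundary divisors is the tool that keeps this bookkeeping tractable.
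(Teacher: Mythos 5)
Your route is genuinely different from the paper's: the paper does not reprove the embedding from scratch but reduces it to \cite[Theorem 1.3]{Giansiracusa-Gillam} (the product of \emph{all} cardinality-four forgetful maps is an embedding), observing that the only thing needing strengthening is the combinatorial Lemma 3.1 there, namely that the quadruple witnessing $x\notin D_{K,L}$ can always be chosen to contain the distinguished index $n$. Your injectivity argument is sound and essentially self-contained: recovering the dual tree from the quartet topologies through $n$ (rooted triples determine a rooted tree), and the bookkeeping that the quartets through $n$ restrict on a divisor $\M_{0,A\cup\{\star\}}\times\M_{0,B\cup\{\bullet\}}$ to the quartets through $\star$ on the $A$-factor and through $n$ on the $B$-factor, is correct and sets up the induction properly.

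The genuine gap is the one you yourself flag, and it is not merely delicate --- the key assertion is false as stated. You claim that a factor-coordinate attached to a quartet split by a node $\nu$ ``moves into the interior of its $\PP^1$ to first order in the smoothing parameter'' $t_\nu$. By the standard pullback formula $s_{Q}^{*}D_{K',L'}=\sum D_{K,L}$ (sum over partitions restricting to $K'\sqcup L'$ on $Q$), the local expression of that coordinate near the boundary point is a unit times $\prod_{\mu} t_\mu$, the product running over \emph{all} nodes splitting the quartet. If the quartet is split by two or more nodes, every first derivative vanishes, and smoothing a single node does not move the coordinate off the boundary at all. What your argument actually requires, and does not supply, is that for every node $\nu$ of a stable tree there exists a quartet containing $n$ split by $\nu$ and by \emph{no other} node; only then is the Jacobian block-triangular with nonzero diagonal entries in the normal directions. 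This existence statement is true --- it follows from stability (each component has at least three special points, so one can pick two leaves branching off distinct special points of the component on the side of $\nu$ away from $n$, and one leaf branching off a special point of the other endpoint distinct from both $\nu$ and the direction of $n$) --- and it is precisely the kind of combinatorial selection that the paper's strengthened lemma performs. Until you state and prove it, and combine it with the order-of-vanishing computation above, the unramifiedness half of your closed-immersion argument is incomplete.
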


Indeed, in the isomorphism $T_{1,n} \cong \M_{0,n+1}$ the last marked point on the RHS is the root hyperplane on the LHS so it is ``remembered'' by all the forgetful maps.  Theorem \ref{thm:forgetful} can be found in the results of \cite{GHvdP}; we  outline another approach here.  The second author and Gillam proved in \cite[Theorem 1.3]{Giansiracusa-Gillam} that the product of all cardinality 4 forgetful maps on $\M_{0,n}$ is an embedding, so the above is a strengthening of this result: to get an embedding, it suffices to use the smaller collection of cardinality 4 forgetful maps which all contain a fixed index.  To prove this, all we need to do is strengthen the key lemma used in the proof of \cite[Theorem 1.3]{Giansiracusa-Gillam}, namely \cite[Lemma 3.1]{Giansiracusa-Gillam}, and then the proof given in \cite{Giansiracusa-Gillam} carries over unchanged to our current setting and yields Theorem \ref{thm:forgetful}.

Let us denote the forgetful map $\M_{0,n} \rightarrow \M_{0,I}$ by $s_I$, since it is also known as a stabilization morphism.  Recall that the boundary divisors of $\M_{0,n}$ are $D_{K,L} \cong \M_{0,|K|+1}\times\M_{0,|L|+1}$ for partitions $[n] = K \sqcup L$ with $|K|,|L| \ge 2$.  

\begin{lemma}
If $x \in \M_{0,n}\setminus D_{K,L}$ then there exists $K' \subseteq K$ and $L'\subseteq L$, each of cardinality two and with $n\in K'\cup L'$, such that $s_{K'\cup L'}(x) \in M_{0,4} \subseteq \M_{0,4}$.
\end{lemma}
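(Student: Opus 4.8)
The plan is to argue entirely with the dual trees of stable pointed rational curves. A field-valued point $x\in\M_{0,n}$ is such a curve, encoded by its dual tree $T$: the leaves are labelled by $[n]$ and the internal vertices are the components. First I would record the standard dictionary for the cardinality-four forgetful map: for $J\subseteq[n]$ with $|J|=4$, the image $s_J(x)$ lands in the open stratum $M_{0,4}$ exactly when the four leaves of $J$ form a \emph{star}, meaning there is a single internal vertex $v$ of $T$ such that the four leaves lie in four distinct connected components of $T\setminus\{v\}$; if instead some edge of $T$ separates the four leaves into two pairs, then $s_J(x)$ is the corresponding one of the three boundary points of $\M_{0,4}$. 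Dually, the hypothesis $x\notin D_{K,L}$ says precisely that $(K\mid L)$ is not a split displayed by $T$, equivalently that the minimal subtrees $T_K$ and $T_L$ spanning the $K$- and $L$-leaves meet in at least one vertex. With these two translations the lemma becomes the purely combinatorial assertion that, assuming $n\in K$, one can choose $k\in K\setminus\{n\}$ and $\ell,\ell'\in L$ so that $\{n,k,\ell,\ell'\}$ is a star.

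To build such a quartet I would root $T$ at the leaf $n$ and take the centre $v$ to be the least common ancestor $v^{*}$ of all the $L$-leaves. By minimality no $L$-leaf lies above $v^{*}$, so the branch of $T\setminus\{v^{*}\}$ containing $n$ carries no $L$-leaf, while at least two of the downward branches of $v^{*}$ each contain an $L$-leaf; these furnish $n$, $\ell$, and $\ell'$ in three pairwise distinct components. The problem is thereby reduced to locating a $K$-leaf (automatically different from $n$, since it sits below $v^{*}$) in a \emph{fourth}, distinct branch at $v^{*}$, for only then are all four leaves in four distinct components and the resulting point of $\M_{0,4}$ is an honest interior point.

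The crux of the whole argument, and the step I expect to be the main obstacle, is exactly this last one, and it is where the non-split hypothesis has to do all the work. If every downward branch of $v^{*}$ carrying a $K$-leaf were among the branches already carrying the $L$-leaves, while the $n$-side carried all the remaining $K$-leaves and no $L$-leaf, then the parent edge of $v^{*}$ would cut $T$ into exactly $K$ on the $n$-side and $L$ below, realising $(K\mid L)$ as a split and contradicting $x\notin D_{K,L}$; hence some downward branch $B$ of $v^{*}$ must be genuinely mixed, containing both a $K$-leaf and an $L$-leaf. The plan is then to recurse inside $B$, treating the node joining $B$ to $v^{*}$ as a surrogate for $n$ and replacing $L$ by $L\cap B$, so that the identical least-common-ancestor analysis either exposes the missing fourth branch or descends to a strictly smaller subtree; finiteness of $T$ would then close the induction. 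I anticipate the delicate case to be when the $L$-leaves below $v^{*}$ occupy only two branches, so that the recursion is forced into a mixed branch with few $L$-leaves, and careful bookkeeping is needed to guarantee the descent is well-posed and terminates.

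Finally, I would stress the point flagged above: landing in the open stratum $M_{0,4}$ requires the chosen quartet to miss \emph{all three} boundary points of $\M_{0,4}$, not merely the point indexed by the partition $(K'\mid L')$, and it is precisely the star criterion of the first paragraph—four leaves in four distinct branches—that enforces this. Accordingly the construction must be engineered to output a genuine star rather than merely to break the $(K'\mid L')$ separation, and the principal technical burden will be verifying, in the mixed-branch recursion, that the four branches produced remain distinct once one passes from branches to individual leaves. Everything outside that recursion is a direct dictionary translation between the curve $x$ and its dual tree.
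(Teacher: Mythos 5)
Your dictionary is accurate---the star criterion for $s_J(x)$ landing in the open stratum $M_{0,4}$, and the translation of $x\notin D_{K,L}$ into $\mathcal{T}(K)\cap\mathcal{T}(L)\neq\varnothing$---but the target you set yourself, producing a quadruple that is a genuine star so that $s_{K'\cup L'}(x)\in M_{0,4}$, is strictly stronger than what the paper's proof establishes, and it is in fact false. Take $n=5$ and let $x$ be the chain of three components $u$--$v$--$w$ with marked points $1,3$ on $u$, point $5$ on $v$, and $2,4$ on $w$; set $K=\{1,2\}$, $L=\{3,4,5\}$. Then $\mathcal{T}(K)$ is the whole chain, so it meets $\mathcal{T}(L)$ and $x\notin D_{K,L}$; but $K'=\{1,2\}$ is forced, and the only admissible quadruples $\{1,2,3,5\}$ and $\{1,2,4,5\}$ are each split two-against-two by an edge of the chain, giving the boundary points $D_{\{1,3\},\{2,5\}}$ and $D_{\{2,4\},\{1,5\}}$ of $\M_{0,4}$---no admissible star exists at all. (For $n=4$, the point $x=D_{\{1,3\},\{2,4\}}$ with $K=\{1,2\}$, $L=\{3,4\}$ is an even quicker counterexample, since $s_{[4]}$ is the identity.) So the ``crux'' recursion you flagged as the main obstacle cannot be completed by any bookkeeping: your mixed-branch descent bottoms out in exactly the delicate case you anticipated, because the statement being chased is unprovable.

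What the paper's own proof delivers (and all that the application to Theorem 5.2 requires) is the weaker conclusion $s_{K'\cup L'}(x)\notin D_{K',L'}$: the image only needs to avoid the \emph{one} boundary point of $\M_{0,4}$ labelled by the partition separating $K'$ from $L'$, and may perfectly well be one of the other two boundary points, as in the examples above. (The printed ``$\in M_{0,4}$'' in the lemma overstates this, and the last sentence of the paper's proof has an evident typo---``lies on'' should be ``lies off'' $D_{K',L'}$; the weak form suffices downstream because a point \emph{of} $D_{K,L}$ maps into $D_{K',L'}$ under \emph{every} such $s_{K'\cup L'}$, so avoiding that single point already separates it from $x$.) With the weak target your own setup finishes in one step, which is essentially the paper's argument: pick any $v\in\mathcal{T}(K)\cap\mathcal{T}(L)$; choose a pair $K'\subseteq K$ whose spanning path passes through $v$ (possible since $v\in\mathcal{T}(K)$: either a $K$-point lies on $v$, or two branches of $\mathcal{T}(K)$ at $v$ carry $K$-leaves); with $n\in L$ say, choose $L'=\{n,\ell\}$ with $\ell$ an $L$-leaf weakly below $v$ in the tree rooted at $n$, so $v\in\mathcal{T}(L')$. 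Then $v\in\mathcal{T}(K')\cap\mathcal{T}(L')$, hence $s_{K'\cup L'}(x)\notin D_{K',L'}$. Replace your star-hunting recursion by this one-step choice and weaken the conclusion accordingly; as written, the proposal pursues a strengthening that the counterexamples rule out.
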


\begin{proof}
As noted in the proof of \cite[Lemma 3.1]{Giansiracusa-Gillam} (and suggested there by the anonymous referee), for any subset $S \subset [n]$ there is a unique minimal subtree $\mathcal{T}(S)$ of irreducible components of $x$ whose union contains all marked points of $S$, and the condition that $x$ lies outside $D_{K,L}$ is equivalent to $\mathcal{T}(K) \cap \mathcal{T}(L) \ne \varnothing$.  Without loss of generality, let us assume $n \in L$.  By using the tree structure of the dual graph of $x$, we can find an index $\ell\in L\setminus\{n\}$ and a cardinality two subset $K' \subseteq K$ such that for $L' = \{\ell,n\}$ we have $\mathcal{T}(K')\cap\mathcal{T}(L') \ne \varnothing$.  Indeed, fix a vertex $v\in \mathcal{T}(K)\cap \mathcal{T}(L)$ and let $\ell\in L\setminus\{n\}$ be any marked point such that the unique path in $\mathcal{T}(L)$ from the vertex corresponding to the irreducible component containing $\ell$ to that of $n$ passes through $v$, and let $K' \subseteq K$ be any pair of markings such that the corresponding path in $\mathcal{T}(K)$ passes through $v$.  Since the subtrees associated to $K'$ and $L'$ intersect, the point $s_{K'\cup L'} \in \M_{0,4}$ lies on the boundary divisor $D_{K',L'}$.
\end{proof}

We now turn to the second item mentioned above, namely, we will show that $\rho$ embeds $T_{1,3} \cong \PP^1$ in $\Chow(\PP^1\times\PP^1\times\PP^1)$.  The first observation is that, since the generic $G$-orbit closure in $(\PP^1)^3$ has dimension 2 and homology class with all coefficients 1, the relevant component of this Chow variety is simply the space of hypersurfaces of multi-degree (1,1,1), namely \[\PP H^0((\PP^1)^3,\mathcal{O}(1,1,1)) \cong \PP^5.\]  The proof then proceeds entirely analogously to that of \cite[Lemma 1.6]{Giansiracusa-Gillam}, except in our case it is easier since we already have proven the existence of our morphism from $T_{1,3} \cong \PP^1$ to this $\PP^5$.  Indeed, to see that this morphism is an embedding it suffices to show that it is linear, and to show that it is linear it suffices to check on the interior; but there the map is explicitly given by sending a configuration to a specialization of the cross-ratio functions described in \cite[\S5]{Giansiracusa-Gillam}, where the specialization is simply given by setting the last coordinate equal to $[0:1] \in \PP^1$.  We directly observe in that section of the paper the linear dependence of the cross-ratio on the configuration of 4 points in $\PP^1$, so the specialization used here depends linearly on our 3 points in $\PP^1$.

This completes the proof of our main Theorem stated in the introduction regarding $\M_{0,n}$.

\subsection{The case $d > 1$} 

By Zariski's Main Theorem, a quasi-finite birational morphism to a normal, noetherian scheme is an open immersion.  Since $T_{d,n}$ is normal, our morphism $\rho : T_{d,n} \rightarrow (\PP^d)^n\ChowQ G$ factors through the normalization of the Chow quotient; let us call this induced map $\rho^\nu : T_{d,n} \rightarrow ((\PP^d)^n\ChowQ G)^\nu$.  Since the Chow variety is projective, the Chow quotient is as well, and in particular finite type.  This implies that the normalization is noetherian and the normalization morphism $((\PP^d)^n\ChowQ G)^\nu \rightarrow (\PP^d)^n\ChowQ G$ is finite and birational.  Thus, since $\rho$ is surjective and birational, so is $\rho^\nu$, and moreover if we show that $\rho$ is quasi-finite then $\rho^\nu$ must be as well, and hence by Zariski's Main Theorem we will be able to conclude that $\rho^\nu$ is an isomorphism.  Thus, we are reduced to showing that our CGK-to-Chow morphism $\rho$, or equivalently its composition with the embedding $(\PP^d)^n\ChowQ G \subseteq \Chow((\PP^d)^n)$ which we also denote by $\rho$, is quasi-finite.

We first note that the restriction of $\rho$ to the open dense stratum in $T_{d,n}$ is injective.  Indeed, here the map is of the form $Gp \mapsto \overline{Gp}$ for $p\in(\PP^d)^n$ consisting of $n$ distinct points lying off the hyperplane $H\subset \PP^d$.  Certainly distinct full-dimensional orbits yield distinct orbit closures, hence distinct points of the Chow variety.  Next, we note that no point of the boundary divisor in $T_{d,n}$ can be sent to the same cycle as a point of the open stratum, since the image of the latter is a prime cycle whereas the image of the former is not.  Therefore, we are reduced to showing that the restriction of $\rho$ to the boundary divisor in $T_{d,n}$ is quasi-finite.  Moreover, since this divisor has only finitely many irreducible components, it suffices to show that the restriction of $\rho$ to a single irreducible component of the boundary divisor is quasi-finite.  Recall (see \S\ref{section:props}) that any such component is of the form $T_{d,n-i+1}\times T_{d,i}$ for some $2 \le i \le n-1$.

The general point of the divisor $T_{d,n-i+1}\times T_{d,i}$ corresponds to a rational variety with exactly two components, say $X = X_0 \cup_{E_0=H_1} X_1$, where $X_0 \cong \Bl_1\PP^d$ is the root component and $X_1 \cong \PP^d$ has at least two marked points.  Let us write $[n] = I_0\sqcup I_1$ where $I_0$ indexes the marked points on $X_0$ and $I_1$ indexes those on $X_1$, so $|I_1| = i \ge 2$.  There is a unique blown-up point on $\overline{X_0} = \PP^d$, call it $p$.  The morphism $\rho$ sends any point in the relative interior of this boundary divisor to a sum of two orbit closure cycles, one obtained by placing all $\{q_i\}_{i\in I_1}$ at $p\in \PP^d$, the other obtained by projecting the $\{q_i\}_{i\in I_0}$ onto $E_0 = H_1 \subset X_1 \cong \PP^d$.  Let us call these cycles type 0 and type 1, respectively.  If two points in this relatively open stratum have the same image in the Chow variety, then by definition the sum of their type 0 and type 1 cycles coincide.  But in fact it must be the case that the type 0 cycles coincide and the type 1 cycles coincide, since, for instance, the type 0 cycles are all contained in locus in $(\PP^d)^n$ where the coordinates indexed by $I_1$ coincide whereas the type 1 cycles are never contained in this locus.   Now, for two points in the relative interior of $T_{d,n-i+1}\times T_{d,i}$ to be distinct means that either the $G$-orbits of the corresponding $|I_0|+1$ special points of $\overline{X_0}$ are distinct, or the $G$-orbits of the $|I_1|$ special points of $\overline{X_1}$ are distinct (or both).  But in either case this means that the corresponding orbit closures are distinct, and hence $\rho$ separates these two points.  This shows that $\rho$ is injective on the relative interior of $T_{d,n-i+1}\times T_{d,i}$.  A straightforward iteration of this argument, using the fact that our dual graphs are always trees, applies to all deeper strata and hence shows that $\rho$ is injective on $T_{d,n-i+1}\times T_{d,i}$ itself, as desired.  This completes the proof of our main Theorem stated in the introduction regarding $T_{d,n}$.


\large{\bibliographystyle{amsalpha}}
\bibliography{bib}

\providecommand{\bysame}{\leavevmode\hbox to3em{\hrulefill}\thinspace}
\providecommand{\MR}{\relax\ifhmode\unskip\space\fi MR }
\providecommand{\MRhref}[2]{%
  \href{http://www.ams.org/mathscinet-getitem?mr=#1}{#2}
}
\providecommand{\href}[2]{#2}
\begin{thebibliography}{GHvdP88}

\bibitem[Ale15]{Alexeev-weighted}
Valery Alexeev, \emph{Moduli of weighted hyperplane arrangements}, Advanced
  Courses in Mathematics. CRM Barcelona, Birkh\"auser/Springer, Basel, 2015,
  Edited by Gilberto Bini, Mart{\'{\i}} Lahoz, Emanuele Macr{\`{\i}} and Paolo
  Stellari.

\bibitem[BM13]{Bruno-Mella}
Andrea Bruno and Massimiliano Mella, \emph{The automorphism group of
  {$\overline{M}_{0,n}$}}, J. Eur. Math. Soc. (JEMS) \textbf{15} (2013), no.~3,
  949--968.

\bibitem[CGK09]{Chen-Gibney-Krashen}
L.~Chen, A.~Gibney, and D.~Krashen, \emph{Pointed trees of projective spaces},
  J. Algebraic Geom. \textbf{18} (2009), no.~3, 477--509.

\bibitem[CT15]{Castravet-Tevelev-mds}
Ana-Maria Castravet and Jenia Tevelev, \emph{{$\overline{M}_{0,n}$} is not a
  {M}ori dream space}, Duke Math. J. \textbf{164} (2015), no.~8, 1641--1667.

\bibitem[DG14]{Doran-Giansiracusa}
Brent Doran and Noah Giansiracusa, \emph{Projective linear configurations via
  non-reductive actions}, arXiv:1401.0691, 2014.

\bibitem[DGJ17]{Doran-Giansiracusa-Jensen}
Brent Doran, Noah Giansiracusa, and David Jensen, \emph{A simplicial approach
  to effective divisors in {$\overline{M}_{0,n}$}}, Int. Math. Res. Not. IMRN
  (2017), no.~2, 529--565.

\bibitem[DK07]{Doran-Kirwan}
Brent Doran and Frances Kirwan, \emph{Towards non-reductive geometric invariant
  theory}, Pure and applied mathematics quarterly \textbf{3} (2007), 61--105.

\bibitem[FM94]{Fulton-MacPherson}
William Fulton and Robert MacPherson, \emph{A compactification of configuration
  spaces}, Ann. of Math. (2) \textbf{139} (1994), no.~1, 183--225.

\bibitem[GG14]{Giansiracusa-Gillam}
Noah Giansiracusa and William~Danny Gillam, \emph{On {K}apranov's description
  of {$\overline M_{0,n}$} as a {C}how quotient}, Turkish J. Math. \textbf{38}
  (2014), no.~4, 625--648.

\bibitem[GHvdP88]{GHvdP}
L.~Gerritzen, F.~Herrlich, and M.~van~der Put, \emph{Stable {$n$}-pointed trees
  of projective lines}, Nederl. Akad. Wetensch. Indag. Math. \textbf{50}
  (1988), no.~2, 131--163.

\bibitem[Gia13]{Giansiracusa-rnc}
Noah Giansiracusa, \emph{Conformal blocks and rational normal curves}, J.
  Algebraic Geom. \textbf{22} (2013), 773--793.

\bibitem[GM10]{Gibney-Maclagan-quotients}
Angela Gibney and Diane Maclagan, \emph{Equations for {C}how and {H}ilbert
  quotients}, Algebra Number Theory \textbf{4} (2010), no.~7, 855--885.

\bibitem[Har77]{Hartshorne}
Robin Hartshorne, \emph{Algebraic geometry}, Springer-Verlag, New York, 1977,
  Graduate Texts in Mathematics, No. 52.

\bibitem[HKT06]{Hacking-Keel-Tevelev-hyperplane}
Paul Hacking, Sean Keel, and Jenia Tevelev, \emph{Compactification of the
  moduli space of hyperplane arrangements}, J. Algebraic Geom. \textbf{15}
  (2006), no.~4, 657--680.

\bibitem[Hu08]{Hu-degen}
Yi~Hu, \emph{Toric degenerations of {GIT} quotients, {C}how quotients, and
  {$\overline M_{0,N}$}}, Asian J. Math. \textbf{12} (2008), no.~1, 47--53.

\bibitem[Kap93a]{Kapranov-chow}
Mikhail Kapranov, \emph{Chow quotients of {G}rassmannians. {I}}, {I}. {M}.
  {G}elfand {S}eminar, Adv. Soviet Math., vol.~16, Amer. Math. Soc.,
  Providence, RI, 1993, pp.~29--110.

\bibitem[Kap93b]{Kapranov-veronese}
\bysame, \emph{Veronese curves and {G}rothendieck-{K}nudsen moduli space
  {$\overline M_{0,n}$}}, J. Algebraic Geom. \textbf{2} (1993), no.~2,
  239--262.

\bibitem[Kee92]{Keel-thesis}
Sean Keel, \emph{Intersection theory of moduli space of stable $n$-pointed
  curves of genus zero}, Trans. Amer. Math. Soc. \textbf{330} (1992), 545--574.

\bibitem[Kir09]{Kirwan}
Frances Kirwan, \emph{Quotients by non-reductive algebraic group actions},
  Moduli spaces and vector bundles, London Math. Soc. Lecture Note Ser., vol.
  359, Cambridge Univ. Press, Cambridge, 2009, pp.~311--366.

\bibitem[Knu83]{Knudsen}
Finn~F. Knudsen, \emph{The projectivity of the moduli space of stable curves.
  {II}. {T}he stacks {$M_{g,n}$}}, Math. Scand. \textbf{52} (1983), no.~2,
  161--199.

\bibitem[Kol96]{Kollar-chow}
J{\'a}nos Koll{\'a}r, \emph{Rational curves on algebraic varieties}, Ergebnisse
  der Mathematik und ihrer Grenzgebiete. 3. Folge. A Series of Modern Surveys
  in Mathematics [Results in Mathematics and Related Areas. 3rd Series. A
  Series of Modern Surveys in Mathematics], vol.~32, Springer-Verlag, Berlin,
  1996.

\bibitem[KS09]{Kim-Sato}
Bumsig Kim and Fumitoshi Sato, \emph{A generalization of
  {F}ulton-{M}ac{P}herson configuration spaces}, Selecta Math. (N.S.)
  \textbf{15} (2009), no.~3, 435--443.

\bibitem[KSZ91]{ChowToric}
M.~M. Kapranov, B.~Sturmfels, and A.~V. Zelevinsky, \emph{Quotients of toric
  varieties}, Math. Ann. \textbf{290} (1991), no.~4, 643--655.

\bibitem[KT06]{Keel-Tevelev-chow}
Sean Keel and Jenia Tevelev, \emph{Geometry of {C}how quotients of
  {G}rassmannians}, Duke Math. J. \textbf{134} (2006), no.~2, 259--311.

\bibitem[Laf03]{Lafforgue}
L.~Lafforgue, \emph{Chirurgie des grassmanniennes}, CRM Monograph Series,
  vol.~19, American Mathematical Society, Providence, RI, 2003.

\bibitem[MFK94]{GIT}
David. Mumford, John Fogarty, and Frances Kirwan, \emph{Geometric invariant
  theory}, third ed., Ergebnisse der Mathematik und ihrer Grenzgebiete (2)
  [Results in Mathematics and Related Areas (2)], vol.~34, Springer-Verlag,
  Berlin, 1994.

\bibitem[MM14]{Manin-Marcolli}
Yuri~I. Manin and Matilde Marcolli, \emph{Big {B}ang, blowup, and modular
  curves: algebraic geometry in cosmology}, SIGMA Symmetry Integrability Geom.
  Methods Appl. \textbf{10} (2014), Paper 073, 20.

\bibitem[Pix13]{Pixton}
Aaron Pixton, \emph{A nonboundary nef divisor on {$\overline M_{0,12}$}}, Geom.
  Topol. \textbf{17} (2013), no.~3, 1317--1324.

\bibitem[Wes13]{Westerland}
Craig Westerland, \emph{Operads of moduli spaces of points in
  {$\mathbb{C}^d$}}, Proc. Amer. Math. Soc. \textbf{141} (2013), no.~9,
  3029--3035.

\end{thebibliography}

\end{document}